\documentclass[11pt]{amsart}
\usepackage{amssymb,amsfonts,amsmath,amscd,wasysym,amsthm}
\usepackage{verbatim} 
\usepackage[all]{xy} 
\usepackage{hyperref}

\setlength{\oddsidemargin}{-2mm}
\setlength{\evensidemargin}{-2mm}
\setlength{\topmargin}{-1.0cm}
\setlength{\headheight}{0.5cm}
\setlength{\headsep}{1.0cm}
\setlength{\textwidth}{16.5cm}
\setlength{\textheight}{23.2cm}

\thispagestyle{plain}
\theoremstyle{plain}

\newtheorem{theorem}{Theorem}[section] 
\newtheorem{lemma}[theorem]{Lemma}     
 
\newtheorem{corollary}[theorem]{Corollary}
\newtheorem{proposition}[theorem]{Proposition}

\newtheorem*{theorem*}{Theorem}
\newtheorem*{corollary*}{Corollary}

\theoremstyle{definition}

\newtheorem{remark}[theorem]{Remark}
\newtheorem{setting}[theorem]{Setting}
\newtheorem{example}[theorem]{Example}

\newcommand{\fm}{\mbox{$\mathfrak{m}$}}
\newcommand{\fp}{\mbox{$\mathfrak{p}$}}
\newcommand{\fq}{\mbox{$\mathfrak{q}$}}
\newcommand{\bn}{\mbox{$\mathbb{N}$}}
\newcommand{\bz}{\mbox{$\mathbb{Z}$}}

\newcommand{\mcm}{\mbox{$\mathcal{M}$}}

\newcommand{\mct}{\mbox{$\mathcal{T}$}}

\newcommand{\mcs}{\mbox{$\mathcal{S}$}}

\newcommand{\lp}{\mbox{$l_{\mathfrak{p}}$}}
\newcommand{\sigp}{\mbox{$\sigma_{\mathfrak{p}}$}}

\newcommand{\Min}{\mbox{${\rm Min}$}}

\newcommand{\length}{\mbox{${\rm length}$}}

\newcommand{\ch}{\mbox{${\rm char}$}}

\newcommand{\mult}{\mbox{${\rm mult}$}}
\newcommand{\embed}{\mbox{${\rm embed}$}}

\title{Non-complete intersection prime ideals in dimension $3$}


\author{{\sc Shiro Goto, Liam O'Carroll and Francesc Planas-Vilanova}}

\date{\today}



\begin{document}

\begin{abstract}
We describe prime ideals of height $2$ minimally generated by $3$
elements in a Gorenstein, Nagata local ring of Krull dimension $3$ and
multiplicity at most $3$. This subject is related to a conjecture of
Y.~Shimoda and to a long-standing problem of J.~Sally.
\end{abstract}

\maketitle


\section{Introduction}\label{introduction}

It is not known whether a Noetherian local ring, such that all its
prime ideals different from the maximal ideal are complete
intersections, has Krull dimension at most $2$. This problem was posed
by Y.~Shimoda and still remains unanswered in its full generality. In
fact, it is a partial version of a more general question of J.~Sally's,
namely, that the existence of a uniform bound on the minimal number of
generators of all its prime ideals is equivalent to the dimension of
the ring being at most $2$.

Note that, in the Shimoda problem, one may assume without loss of
generality that the local ring is Cohen-Macaulay and has dimension at
most $3$ (see \cite{gop} for more details, and particularly,
\cite[Remarks~2.2 and 2.4]{gop}).  Similarly, one may ask whether one
can display a prime ideal of height $2$ and minimally generated by at
least $3$ elements in a Cohen-Macaulay local ring of dimension $3$. By
a result due to M.~Miller \cite[Theorem~2.1]{miller}, under reasonably
general hypotheses, a local domain of dimension at least $4$ containing
a field possesses an abundance of prime ideals of height $2$ that are
not complete intersections.

The purpose of the paper is threefold: to generalise the results
obtained in the first part of \cite{gop}, to give simpler proofs, and
finally, to display a wide collection of examples to illustrate the
range of behaviour that occurs.

Let $(R,\fm,k)$ be a Cohen-Macaulay local ring, with $k$ infinite,
$\dim R=3$ and multiplicity $e(R)$. Let $(x,y,z)$ be a minimal
reduction of $\fm$. We ask for $k$ to be infinite just to ensure that
$\fm$ has a minimal reduction. If $R$ is regular local, we do not need
such an hypothesis, as $\fm$ is then its own minimal reduction.

Take $a=(a_{1},a_{2},a_{3})\in \bn_+^{3}$ and
$b=(b_{1},b_{2},b_{3})\in\bn_+^{3}$, where $\bn_+$ denotes the set of
positive integers; set $\bn=\{0\}\cup\bn_+$. Let $c=a+b$,
$c=(c_{1},c_{2},c_{3})$. Let $\mcm$ be the matrix
\begin{eqnarray*}
\mcm=\left(\begin{array}{ccc}x^{a_{1}}&y^{a_{2}}&z^{a_{3}}
  \\ y^{b_{2}}&z^{b_{3}}&x^{b_{1}}\end{array}\right),
\end{eqnarray*}
and $v_{1}=x^{c_{1}}-y^{b_{2}}z^{a_{3}}$,
$v_{2}=y^{c_{2}}-x^{a_{1}}z^{b_{3}}$ and
$D=z^{c_{3}}-x^{b_{1}}y^{a_{2}}$, the $2\times 2$ minors of $\mcm$ up
to a change of sign. Consider $I=(v_{1},v_{2},D)$, the determinantal
ideal generated by the $2\times 2$ minors of $\mcm$. Then $I$ is a
non-Gorenstein height-unmixed ideal of height two, minimally generated
by three elements (see \cite{op2}, where these ideals were called
Herzog-Northcott ideals, or HN ideals for short).

Throughout the paper we fix this notation, and $(R,\fm,k)$ and $I$
will be defined as above. Under additional assumptions on $R$, we will
study the minimal primary decomposition of $I$ and prove that either
$I$ itself is prime or else $I$ has a minimal prime which is not a
complete intersection, thus leading to the existence of prime ideals
of height $2$ and minimally generated by at least $3$ elements.

Set $m_1=c_{2}c_{3}-a_{2}b_{3}$, $m_2=c_{1}c_{3}-a_{3}b_{1}$,
$m_3=c_{1}c_{2}-a_{1}b_{2}$ and $m=(m_1,m_2,m_3)\in\bn_+^3$. Note that
each $m_i\geq 3$. We will always suppose that $m_1\leq m_2\leq m_3$
and that $\gcd(m_1,m_2,m_3)=1$ (changing $a$ to $(b_2,b_1,b_3)$ and
$b$ to $(a_2,a_1,a_3)$ changes $m$ to $(m_2,m_1,m_3)$; similarly,
changing $a$ to $(b_1,b_3,b_2)$ and $b$ to $(a_1,a_3,a_2)$ changes $m$
to $(m_1,m_3,m_2)$). Let $\mcs(I)=\langle m_1,m_2,m_3\rangle$ denote
the numerical semigroup generated by $m_1,m_2,m_3$ (see, e.g.,
\cite{rgs}).

Recall that a numerical semigroup $\mcs$ is a subset of $\bn$, closed
under addition, with $0\in\mcs$, and such that
$G(\mcs):=\bn\setminus\mcs$, the set of {\em gaps} of $\mcs$, is
finite. The cardinality of $G(\mcs)$ is denoted by $g(\mcs)$ and is
called the {\em genus} of $\mcs$. The {\em Frobenius number} $F(\mcs)$
of $\mcs$ is the greatest integer in $G(\mcs)$.  One can prove that
$g(\mcs)\geq (F(\mcs)+1)/2$. Moreover, $\mcs$ is {\em irreducible} if
it cannot be expressed as the intersection of two numerical semigroups
properly containing it, and $\mcs$ is {\em symmetric} if it is
irreducible and $F(\mcs)$ is odd. Alternatively, $\mcs$ is symmetric
if and only if $g(\mcs)=(F(\mcs)+1)/2$ (cf. \cite[Lemma~2.14 and
  Corollary~4.5]{rgs}).  Let $\{m_1<m_2<\ldots <m_r\}$ be the
(necessarily unique) minimal system of generators of a numerical
semigroup $\mcs$. The {\em multiplicity} of $\mcs$ is defined by the
expression $\mult(\mcs)=m_1$ and the {\em embedding dimension} of
$\mcs$ is defined by the expression $\embed(\mcs)=r$ (see
\cite[Theorem~2.7 and Proposition~2.10]{rgs}). Every numerical
semigroup of emdedding dimension two is symmetric
(\cite[Corollary~4.7]{rgs}).

For any other unexplained notation, we refer to \cite{bh} or
\cite{hs}. Our main result is as follows. Note that a minimal prime
over $I$ is necessarily of height two.

\begin{theorem*} 
Let $(R,\fm,k)$ be a Gorenstein, Nagata local ring, with $k$ infinite,
and $\dim R=3$. Let $(x,y,z)$ be a minimal reduction of $\fm$. Let
$I=(x^{c_{1}}-y^{b_{2}}z^{a_{3}},y^{c_{2}}-x^{a_{1}}z^{b_{3}},z^{c_{3}}-x^{b_{1}}y^{a_{2}})$.
Suppose that $\mcs(I)=\langle m_1,m_2,m_3\rangle$ is not contained in
any symmetric semigroup $\mcs$ with $\mult(\mcs)=m_1$. If $e(R)\leq 3$,
then either $I$ is prime, or else there exists a minimal prime $\fp$
over $I$ such that $\fp$ is not a complete intersection.
\end{theorem*}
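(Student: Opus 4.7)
The plan is to argue by contrapositive: suppose $I$ is not prime and every minimal prime $\fp$ over $I$ is a complete intersection; we then aim to produce a symmetric numerical semigroup $\mcs$ with $\mcs(I) \subseteq \mcs$ and $\mult(\mcs) = m_1$. Since $I$ is the ideal of maximal minors of the $2 \times 3$ matrix $\mcm$ and has the expected height two, the Hilbert--Burch theorem gives that $R/I$ is Cohen--Macaulay of dimension one; hence every associated prime of $I$ is a minimal prime of height two, and none contains any of $x, y, z$ (otherwise $\fp \supseteq \fm$ would contradict $\height \fp = 2$).

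Fix a complete intersection minimal prime $\fp = (f, g)$ of $I$. Then $R/\fp$ is a one-dimensional Gorenstein local Nagata domain, so its normalisation $\overline{R/\fp}$ is module-finite and a semilocal Dedekind domain. Choosing a maximal ideal $\fn$ of $\overline{R/\fp}$ (and, if necessary, passing to the corresponding analytic branch of the completion to guarantee analytic irreducibility) yields a discrete valuation $v$ whose value semigroup $\mcs(v) \subseteq \bn$ is symmetric by Kunz's theorem for one-dimensional analytically irreducible Gorenstein local domains.

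To connect $\mcs(v)$ with $\mcs(I)$, apply $v$ to the three congruences $x^{c_1} \equiv y^{b_2} z^{a_3}$, $y^{c_2} \equiv x^{a_1} z^{b_3}$, $z^{c_3} \equiv x^{b_1} y^{a_2}$ modulo $\fp$. Setting $\alpha = v(x)$, $\beta = v(y)$, $\gamma = v(z)$, which are all finite by the previous paragraph, gives a homogeneous linear system whose $3 \times 3$ coefficient matrix has determinant zero and a nonzero $2 \times 2$ minor $c_1 c_2 - a_1 b_2 = m_3$, hence has rank two and one-dimensional kernel spanned by $(m_1, m_2, m_3)$ (a direct check using $m_1 = c_2 c_3 - a_2 b_3$ and so on). Since $\gcd(m_1, m_2, m_3) = 1$, this forces $(\alpha, \beta, \gamma) = \lambda(m_1, m_2, m_3)$ for some $\lambda \in \bn_+$; normalising $v$ to the primitive valuation along the branch gives $\lambda = 1$, whence $\mcs(I) \subseteq \mcs(v)$.

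The main obstacle is to pin down $\mult(\mcs(v)) = m_1$, rather than merely $\leq m_1$; equivalently, $e(R/\fp) = m_1$. This is where $e(R) \leq 3$ enters decisively. My plan is to combine the associativity formula
\[
e(R/I) = \sum_{\fp \in \Min(I)} \ell_{R_\fp}(R_\fp/I_\fp)\, e(R/\fp)
\]
with a direct computation of $e(R/I)$ from the HN structure of $I$ and the minimal reduction $(x,y,z)$ of $\fm$ (anticipated to equal $m_1 \cdot e(R)$), and with the fact that each local length $\ell_{R_\fp}(R_\fp/I_\fp)$ is controlled by the complete intersection hypothesis on $\fp$. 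The bound $e(R) \leq 3$ then leaves so little room that some minimal prime must satisfy $e(R/\fp) = m_1$; for that $\fp$, $\mcs(v)$ is a symmetric semigroup containing $\mcs(I)$ with $\mult(\mcs(v)) = m_1$, contradicting the hypothesis.
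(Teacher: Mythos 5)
Your proposal is the paper's argument rephrased as a contrapositive, and it uses the same core ingredients (associativity formula, $e(R/I)=m_1 e(R)$, the valuation-semigroup link, and the characterisation of one-dimensional Gorenstein domains via symmetric value semigroups). However, two of your steps are handled in the wrong order, and one is simply incorrect as stated; the result is a real gap.

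First, your invocation of Kunz's theorem is not justified at the point where you use it. For a general complete intersection minimal prime $\fp$, the domain $D=R/\fp$ is Gorenstein, but it need not be analytically irreducible nor residually rational, and ``passing to the corresponding analytic branch of the completion'' does not repair this: the branches $\widehat{D}/\fq_i$ of a Gorenstein local ring need not themselves be Gorenstein, so you cannot conclude that a chosen branch has a symmetric value semigroup. The paper resolves this in the opposite order: it \emph{first} proves (Corollary~\ref{cases}, via the multiplicity count you sketch in your last paragraph) that when $e(R)\le 3$ and $I$ is not prime some minimal prime satisfies $e(R/\fp)=m_1$, and \emph{then} shows (Proposition~\ref{eD}~$(d)$) that $e(R/\fp)=m_1$ forces $V=\overline{D}$ to be a DVR, $\eta=1$, and $k$ to be residually rational --- exactly the hypotheses needed to relate Gorensteinness of $D$ to symmetry of $\nu(D)$. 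You apply Kunz before this reduction, which is where the argument breaks.

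Second, the claim ``normalising $v$ to the primitive valuation along the branch gives $\lambda=1$'' is false in general. Even for the normalised valuation of a DVR one only has $(\nu(x),\nu(y),\nu(z))=\lambda(m_1,m_2,m_3)$ with $\lambda\in\bn_+$; there is no reason $\lambda=1$ unless you already know $e(D)=m_1$. Indeed $\lambda=1$ is \emph{equivalent} to $e(D)=m_1$ (together with $V$ a DVR and residual rationality), which is precisely what the multiplicity argument is supposed to establish, so you cannot assume it in the preceding step.

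Third, your final paragraph is a sketch rather than a proof, and one ingredient in it is spurious. The claim that ``each local length $\ell_{R_\fp}(R_\fp/I_\fp)$ is controlled by the complete intersection hypothesis on $\fp$'' has no basis and plays no role in the correct argument. What the accounting actually needs --- and what you omit --- is that each $e(R/\fp)$ is a \emph{positive integer multiple} of $m_1$. This comes from $e(R/\fp)=e_{R/\fp}(x(R/\fp))=\sum_Q e_{V_Q}(xV_Q)[k_{V_Q}:k]=m_1\sum_Q\eta(Q)[k_{V_Q}:k]$ (valid because $R$ is Nagata, so $V$ is module-finite over $D$), i.e.\ Proposition~\ref{eD}~$(c)$. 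Only with that integrality in hand does $e(R)\le 3$ force some minimal prime to have $e(R/\fp)=m_1$. So while your strategy is correct in outline and is genuinely the same route as the paper's, the two crucial reductions are either left unproved or argued in a way that does not work.
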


This result generalises \cite[Proposition~2.8]{gop}, since on the one
hand, a complete Noetherian local ring $R$ is Nagata (see
\cite[Chapter~12, \S~31, Corollary~2]{matsumura2}) and, on the other
hand, we do not need the ring to be a domain or contain the residue
field. As a consequence, it generalises the main result in \cite{gop},
since the hypotheses of \cite[Theorem~2.3]{gop} imply that $R$ is
Gorenstein and Nagata. In other words, we obtain the following
result. Recall that a Noetherian local ring is {\em Shimoda} if every
prime ideal in the punctured spectrum is of the principal class.

\begin{corollary*} 
Let $(R,\fm,k)$ be a Shimoda ring of dimension $d\geq 2$. Then $d=2$
provided that either $R$ is regular, or else $R$ is Gorenstein,
Nagata, $k$ is infinite and $e(R)\leq 3$.
\end{corollary*}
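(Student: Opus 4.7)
The plan is to derive a contradiction from the assumption $d\geq 3$ by applying the main theorem to a carefully chosen HN ideal, thereby exhibiting a height-$2$ prime in the punctured spectrum of $R$ that is not of principal class. By \cite[Remarks~2.2 and 2.4]{gop} one may reduce to the case that $R$ is Cohen--Macaulay and $d\leq 3$, so it suffices to rule out $d=3$. In the Gorenstein--Nagata branch the hypotheses of the main theorem hold directly. In the regular branch, $R$ is automatically Gorenstein with $e(R)=1\leq 3$; as noted before the main theorem, the hypothesis that $k$ be infinite is unnecessary because $\fm$ is its own minimal reduction; and the Nagata hypothesis can be arranged by passing to the $\fm$-adic completion $\widehat R$, which is again regular of dimension $3$ and, being complete, is Nagata. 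Any height-$2$ prime $\mathfrak{P}\subset \widehat R$ that is not a complete intersection then yields, by faithfully flat descent of the minimal number of generators, a height-$2$ prime of $R$ with the same property.

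Next, choose an HN ideal $I$ to which the main theorem applies. Take $a=(1,1,1)$ and $b=(1,1,2)$, so $c=(2,2,3)$, and a short computation gives $m_1=4$, $m_2=5$, $m_3=3$; after reordering, $(m_1,m_2,m_3)=(3,4,5)$ with $\gcd=1$. The associated semigroup is $\mcs(I)=\langle 3,4,5\rangle=\bn\setminus\{1,2\}$, of Frobenius number $2$ and genus $2$; in particular $\mcs(I)$ is not itself symmetric.

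We now verify that $\mcs(I)$ is not contained in any symmetric semigroup $\mct$ with $\mult(\mct)=3$. A direct Ap\'ery-set analysis shows that every multiplicity-$3$ numerical semigroup of embedding dimension $3$ has even Frobenius number and so fails to be symmetric; hence every symmetric semigroup of multiplicity $3$ has embedding dimension $2$ and is of the form $\langle 3,k\rangle$ with $\gcd(3,k)=1$. A short case check then gives $\langle 3,4,5\rangle\not\subseteq\langle 3,k\rangle$ for every such $k$: when $k=4$ we have $5\notin\langle 3,4\rangle$, and when $k\neq 4$ we have $4\notin\langle 3,k\rangle$.

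Finally, applying the main theorem to $I$ gives either that $I$ itself is prime of height $2$, minimally generated by three elements (hence not a complete intersection), or that $I$ has a minimal prime $\fp$ of height $2$ that is not a complete intersection. In either case $\fp\neq \fm$ lies in the punctured spectrum of $R$ and is not of principal class, contradicting the Shimoda hypothesis; hence $d=2$. The main obstacle in this plan is the regular-case reduction: one must confirm that the failure of the principal-class property really transports back from $\widehat R$ (or from $R(X)$, were one to reduce differently) to $R$ itself, which is where the Shimoda hypothesis is imposed; the Gorenstein--Nagata branch is a direct application of the main theorem.
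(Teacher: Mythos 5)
Your overall strategy is the intended one: reduce via \cite[Remarks~2.2 and 2.4]{gop} to the Cohen--Macaulay, dimension-$3$ case, form an HN ideal $I$ with $\mcs(I)=\langle 3,4,5\rangle$ (as in Example~\ref{3,4,5}), check the semigroup hypothesis, and invoke Theorem~\ref{mainT} to produce a height-$2$ prime in the punctured spectrum that is not of the principal class, contradicting the Shimoda property. However, there are two gaps.

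First, the semigroup verification contains a false intermediate claim. It is not true that every multiplicity-$3$, embedding-dimension-$3$ numerical semigroup has even Frobenius number: for $\langle 3,7,8\rangle$ the Ap\'ery set with respect to $3$ is $\{0,7,8\}$, so $F=8-3=5$ is odd (the semigroup is nonetheless not symmetric, since $g=4\neq(F+1)/2=3$). The correct statement, that no multiplicity-$3$, embedding-dimension-$3$ semigroup is symmetric, follows from Ap\'ery-set symmetry: with $Ap(\mcs,3)=\{0,m_2,m_3\}$ and $m_2<m_3$, symmetry forces $m_3-m_2\in Ap(\mcs,3)$, hence $m_3=2m_2$, contradicting $\embed(\mcs)=3$. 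Alternatively, and more cheaply, since $\langle 3,4,5\rangle=\bn\setminus\{1,2\}$, the only multiplicity-$3$ semigroup containing it is itself, and it is not symmetric; or simply cite Proposition~\ref{semigroups}, which places $\langle 3,4,5\rangle$ in $\Delta$.

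Second, the regular-case reduction is the real soft spot, as you acknowledge. The assertion that ``any height-$2$ prime $\mathfrak{P}\subset\widehat R$ not a complete intersection yields, by faithfully flat descent of the minimal number of generators, a height-$2$ prime of $R$ with the same property'' is not correct as stated: contracting an arbitrary prime of $\widehat R$ does not control height or $\mu$. The fix is to work with a \emph{specific} ideal. Since $R$ is regular of dimension $3$, choose a regular system of parameters $x,y,z$ and form $I\subseteq R$ as in Example~\ref{3,4,5}. Then $I\widehat R$ is the HN ideal of $\widehat R$ (with the same generators and the same $\mcs=\langle 3,4,5\rangle$), and $\widehat R$ is complete, hence Nagata, regular, Gorenstein, with $e(\widehat R)=1$; Corollary~\ref{cases}$(a)$ gives $I\widehat R$ prime. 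The natural injection $R/I\hookrightarrow \widehat{R/I}=\widehat R/I\widehat R$ then shows $R/I$ is a domain, so $I$ is prime in $R$; it has height $2$ and $\mu(I)=3$, so it is not of the principal class. Even more directly, the Nagata hypothesis can be avoided altogether in the regular case: with $m_1=3$ and $e(R)=1$, Lemma~\ref{eRIism1e} gives $e(R/I)=3$; the Associativity Law gives $3=\sum_{\fp}e(R/\fp)\lp$; Proposition~\ref{eD}$(b)$ (which does not use Nagata) gives each $e(R/\fp)\geq 2$; hence $n_I=1$, $e(R/\fp)=3$ and $\lp=1$, and since $I$ is unmixed with unique associated prime $\fp$ and $I_{\fp}=\fp_{\fp}$, we get $I=\fp$ prime. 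Either route closes the gap you flagged.
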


We finish the paper with examples that show that each one of the
particular cases arising in the main theorem can occur. 

\section{Preliminary results}\label{preliminary}

We start by substantiating some remarks on the multiplicity of $R$ and
$R/I$.

\begin{remark}\label{reductionsI}
We first observe that $R/I$ is a one-dimensional Cohen-Macaulay
ring. Next we remark that $xR/I$ is a minimal reduction of $\fm
R/I$. Indeed, and with an obvious abuse of notation, in $R/I$ one has
the following equalities: $x^{c_{1}}=y^{b_{2}}z^{a_{3}}$,
$y^{c_{2}}=x^{a_{1}}z^{b_{3}}$ and
$z^{c_{3}}=x^{b_{1}}y^{a_{2}}$. Then it is easy to check that
$y^{c_2c_3}=x^{m_2}y^{a_2b_3}$. Since $y$ is not a zero divisor in
$R/I$, $y^{m_1}=x^{m_2}$ and $x^{m_2}$ belongs to $(xR/I)^{m_1}$ since
$m_1\leq m_2$. Therefore $y\in\overline{xR/I}$, the integral closure
of the ideal $xR/I$. Analogously, one can check that
$z^{m_1}=x^{m_3}\in (xR/I)^{m_1}$, so $z\in \overline{xR/I}$. Hence
$xR/I$ is a reduction of $(x,y,z)R/I$. Since $(x,y,z)R/I$ is a
reduction of $\fm R/I$, then $xR/I$ is a reduction of $\fm R/I$. Since
$\dim R/I=1$, $xR/I$ is a minimal reduction of $\fm R/I$. Observe also
that $x+I$ forms a regular sequence in $R/I$. In particular,
\begin{eqnarray*}
&&e(R/I)=e_{R/I}(\fm R/I;R/I)=e_{R/I}(xR/I;R/I)=\\&&
=e_{R/I}(x+I;R/I)=\length_{R/I}((R/I)/(x+I)R/I)=\length_R(R/(xR+I)).
\end{eqnarray*}

Analogously, if $\fp$ is a minimal prime over $I$, then $xR/\fp$ is a
minimal reduction of $\fm R/\fp$ and
$e(R/\fp)=e_{R/\mathfrak{p}}(xR/\fp;R/\fp)= \length_R(R/(xR+\fp))$.
\end{remark}

\begin{lemma}\label{eRIism1e}
$e(R/I)=m_1e(R)$.
\end{lemma}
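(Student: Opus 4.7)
By Remark \ref{reductionsI}, $e(R/I) = \length_R(R/(xR+I))$, so it suffices to compute this length. Setting $S = R/xR$ and using that $a_1, b_1, c_1 \geq 1$, the reductions of $v_1, v_2, D$ modulo $x$ are $-y^{b_2}z^{a_3}$, $y^{c_2}$, $z^{c_3}$, giving $R/(xR+I) \cong S/(y^{c_2}, z^{c_3}, y^{b_2}z^{a_3})$. Since $(x,y,z)$ is a system of parameters in the Cohen-Macaulay ring $R$ of dimension $3$, the triple $x,y,z$ is a regular sequence, whence $y, z$ is a regular sequence in the Cohen-Macaulay local ring $S$ of dimension $2$, and $\length_R(S/(y,z)) = \length_R(R/(x,y,z)) = e((x,y,z);R) = e(R)$.

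The idea is to split the length using the short exact sequence
\begin{equation*}
0 \to (y^{b_2}z^{a_3}) \cdot S/(y^{c_2}, z^{c_3}) \to S/(y^{c_2}, z^{c_3}) \to S/(y^{c_2}, z^{c_3}, y^{b_2}z^{a_3}) \to 0.
\end{equation*}
The middle term has length $c_2 c_3 \cdot e(R)$ by the standard formula for the length of a quotient by powers of a regular sequence in a Cohen-Macaulay local ring. Multiplication by $y^{b_2}z^{a_3}$ induces an isomorphism $S/\bigl((y^{c_2}, z^{c_3}) : y^{b_2}z^{a_3}\bigr) \cong (y^{b_2}z^{a_3}) \cdot S/(y^{c_2}, z^{c_3})$, so the leftmost term has the same length as $S/\bigl((y^{c_2}, z^{c_3}) : y^{b_2}z^{a_3}\bigr)$.

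The key technical ingredient — and the only step that is not an immediate consequence of Remark \ref{reductionsI} and textbook facts — is the colon identity
\begin{equation*}
(y^{c_2}, z^{c_3}) : y^{b_2}z^{a_3} = (y^{a_2}, z^{b_3}),
\end{equation*}
valid because $y, z$ is a regular sequence (one inclusion is immediate from $c_i = a_i + b_i$; the other can be obtained from the explicit $S/(y,z)$-basis of $S/(y^{c_2}, z^{c_3})$ consisting of the residues of the $y^i z^j$ with $0 \leq i < c_2$, $0 \leq j < c_3$). Granting this, the leftmost term has length $a_2 b_3 \cdot e(R)$, and subtracting gives
\begin{equation*}
e(R/I) \;=\; \length_R(R/(xR+I)) \;=\; (c_2 c_3 - a_2 b_3)\cdot e(R) \;=\; m_1 \cdot e(R),
\end{equation*}
as required.
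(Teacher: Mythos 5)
Your proof is correct, and it follows the paper's overall strategy of passing to $S=R/xR$ and reducing the claim to the length computation
\[
\length_S\bigl(S/(y^{c_2},y^{b_2}z^{a_3},z^{c_3})S\bigr)=m_1\,\length_S(S/(y,z)S)=m_1\,e(R).
\]
The difference is in how that length identity is established: the paper simply cites \cite[Lemma~2.9]{gop} as a black box, whereas you give a self-contained argument, splitting off the short exact sequence
\[
0\to S/\bigl((y^{c_2},z^{c_3}):y^{b_2}z^{a_3}\bigr)\to S/(y^{c_2},z^{c_3})\to S/(y^{c_2},z^{c_3},y^{b_2}z^{a_3})\to 0,
\]
computing the colon ideal $(y^{c_2},z^{c_3}):y^{b_2}z^{a_3}=(y^{a_2},z^{b_3})$ from the regular sequence $y,z$ (using $c_i=a_i+b_i$), and then using that length equals multiplicity on parameter ideals in the Cohen--Macaulay ring $S$ to evaluate both outer terms. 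Your route has the advantage of being self-contained and of making visible exactly why $m_1=c_2c_3-a_2b_3$ appears; the paper's citation of the known lemma is shorter but opaque at this point. All the ingredients you use (the colon computation, the isomorphism $S/(J:f)\cong fS/J$, and $\length(S/(y^a,z^b))=ab\,\length(S/(y,z))$) are standard for a two-dimensional Cohen--Macaulay local ring with system of parameters $y,z$, and the reduction $xR+I=(x,y^{c_2},y^{b_2}z^{a_3},z^{c_3})$ is correct since $a_1,b_1,c_1\geq 1$. No gaps.
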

\begin{proof}
By Remark~\ref{reductionsI}, $e(R/I)=\length_R(R/(xR+I))$, where
$xR+I=(x,y^{c_2},y^{b_2}z^{a_3},z^{c_3})$. With $S=R/xR$, note that
$R/(xR+I)\cong S/(y^{c_2},y^{b_2}z^{a_3},z^{c_3})S$. In the
two-dimensional Cohen-Macaulay local ring $S$, and with an obvious
abuse of notation, $y,z$ is a regular sequence and a system of
parameters. By \cite[Lemma~2.9]{gop},
$\length_{R}(R/(xR+I))=\length_{S}(S/(y^{c_2},y^{b_2}z^{a_3},z^{c_3})S)=
m_1\length_S(S/(y,z)S)$.  Since $S/(y,z)S\cong R/(x,y,z)$ and $x,y,z$
is a minimal reduction of $\fm$, then
$\length_S(S/(y,z)S)=\length_R(R/(x,y,z))=e_R(x,y,z;R)=e_R(\fm;R)=e(R)$.
\end{proof}

We now fix some more notations.

\begin{setting}\label{DandV}
For a minimal prime $\fp$ over $I$, let $D=R/\fp$, which is a
one-dimensional Noetherian local domain with maximal ideal $\fm_D$,
say. Let $V=\overline{D}$ be the integral closure of $D$ in its
quotient field; then $V$ is a Dedekind domain by the Krull-Akizuki
Theorem. If $Q$ is a maximal ideal of $V$, then $V_Q$ is a DVR. Let
$\fm_{V_Q}=QV_Q$ denote its maximal ideal, $k_{V_Q}$ its residue field
and $\nu_{Q}$ its valuation. If $V$ is local, let $\fm_V$ denote its
maximal ideal, $k_V$ its residue field and $\nu$ its valuation. If $V$
is local and $k=k_V$ under the natural identification, one says that
$k$ is {\em residually rational}. If $R$ is a Nagata ring, then $V$ is
a finitely generated $D$-module.
\end{setting}

\begin{proposition}\label{eD}
Let $\fp$ be a minimal prime over $I$. The following hold.
\begin{itemize}
\item[$(a)$] For any $Q$, there exists $\eta=\eta(Q)\in\bn_{+}$ with
  that $(\nu_Q(x),\nu_Q(y),\nu_Q(z))=\eta (m_1,m_2,m_3)$.
\item[$(b)$] $e(D)>1$.
\end{itemize}
Suppose that, in addition, $R$ is Nagata. The following hold.
\begin{itemize}
\item[$(c)$] $e(D)=m_1\sigp$, where
  $\sigp=\sum_{Q}\eta(Q)[k_{V_Q}:k]$.
\item[$(d)$] $e(D)=m_1$ if and only if $V$ is a DVR, $\eta=1$ and $k$
  is residually rational. 
\item[$(e)$] Moreover, if $e(D)=m_1$, then $D$ is analytically
  irreducible.
\end{itemize}
\end{proposition}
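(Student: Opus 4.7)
The plan is to handle the five parts sequentially, with part (a) feeding into every subsequent part.

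For (a), I will exploit the relations $y^{m_1} = x^{m_2}$ and $z^{m_1} = x^{m_3}$ that were derived for $R/I$ in Remark~\ref{reductionsI}. Since $\fp \supset I$, both identities descend to $D = R/\fp$. Applying $\nu_Q$ yields $m_1\nu_Q(y) = m_2\nu_Q(x)$ and $m_1\nu_Q(z) = m_3\nu_Q(x)$, so $\nu_Q(x)/m_1$, $\nu_Q(y)/m_2$ and $\nu_Q(z)/m_3$ agree as rationals; call their common value $\eta = \eta(Q)$. The assumption $\gcd(m_1,m_2,m_3)=1$ forces $\eta \in \bz$, and strict positivity follows because every maximal ideal of the one-dimensional Dedekind domain $V$ contracts to $\fm_D$ in $D$, so $x,y,z \in \fm_D \subset Q$.

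For (b), suppose toward a contradiction that $e(D) = 1$. A one-dimensional Cohen-Macaulay local ring of multiplicity one is a DVR. Repeating the argument of Remark~\ref{reductionsI} with $\fp$ in place of $I$ shows that $xD$ is a minimal reduction of $\fm_D$; in a DVR this forces $\nu(x) = 1$, contradicting $\nu(x) = \eta m_1 \ge 3$ from part (a).

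For (c), the Nagata hypothesis makes $V$ a finite $D$-module. The strategy is to first reduce $\length_D(D/xD)$ to $\length_D(V/xV)$ and then decompose the latter using the primary decomposition of $xV$ in the semilocal Dedekind domain $V$. The snake lemma applied to multiplication by $x$ on the exact sequence $0 \to D \to V \to V/D \to 0$ produces a four-term exact sequence; since $V/D$ is a finitely generated torsion $D$-module it has finite length, so multiplication by $x$ on it has kernel and cokernel of equal length. This collapses the sequence to $\length_D(D/xD) = \length_D(V/xV)$. By the Chinese Remainder Theorem $V/xV \cong \prod_Q V_Q/xV_Q$, and each factor is filtered by the powers of $QV_Q/xV_Q$ with successive quotients isomorphic to $k_{V_Q}$, of $D$-length $[k_{V_Q}:k]$. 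Substituting $\nu_Q(x) = \eta(Q) m_1$ from (a) produces $e(D) = m_1 \sigp$.

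Parts (d) and (e) are then brief corollaries. Since $\sigp$ is a sum of positive integers, (d) is immediate from (c): $e(D)=m_1$ iff the sum has a single term with $\eta=1$ and $[k_V:k]=1$. For (e), $e(D) = m_1$ forces $V$ to be local (a DVR) by (d), whereupon analytic irreducibility of $D$ follows from the classical fact that a one-dimensional Nagata local domain is analytically irreducible whenever its integral closure is local. The main technical obstacle will be (c): getting the snake-lemma bookkeeping to give $\length_D(D/xD) = \length_D(V/xV)$ cleanly, and keeping careful track of the $D$-module versus $V_Q$-module structures in the length computation of $V_Q/xV_Q$.
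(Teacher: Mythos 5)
Your proposal is correct, and most of it mirrors the paper's argument closely. Parts (a), (b), (d) and (e) follow essentially the same route as the paper: in (a) the paper applies $\nu_Q$ directly to the three defining relations and invokes \cite[Remark~4.4]{op2} to get a rational $\eta$, while you instead apply $\nu_Q$ to the already-derived relations $y^{m_1}=x^{m_2}$ and $z^{m_1}=x^{m_3}$ from Remark~\ref{reductionsI}; both give the same system, and the $\gcd$ argument for $\eta\in\bn_+$ is identical. Your argument for (b) is logically equivalent to the paper's (the paper gets $\fm_D=xD$ immediately from $\length_R(R/(xR+\fp))=1$; you deduce $D$ is a DVR from $e(D)=1$ via Abhyankar/Nagata and then read off $\nu(x)=1$ from minimal reduction — either way, $m_1\geq 3$ gives the contradiction). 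Parts (d) and (e) are the same.

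The genuine difference is in (c). The paper simply cites \cite[Theorem~11.2.7]{hs} — the additivity formula
$e_D(xD;D)=\sum_Q e_{V_Q}(xV_Q;V_Q)[k_{V_Q}:k]$
for the finite normalization $V$ of a one-dimensional local domain — and then substitutes $e_{V_Q}(xV_Q;V_Q)=\nu_Q(x)=\eta(Q)m_1$. You instead reprove that formula from scratch: the snake lemma on $0\to D\to V\to V/D\to 0$ with the map ``multiplication by $x$'', together with the observation that $V/D$ has finite $D$-length (torsion and finitely generated, by Nagata) and hence $\length(\ker x|_{V/D})=\length(\coker x|_{V/D})$, collapses the four-term sequence to $\length_D(D/xD)=\length_D(V/xV)$; then CRT and the filtration of each $V_Q/xV_Q$ by powers of $QV_Q$ give $\length_D(V/xV)=\sum_Q\nu_Q(x)[k_{V_Q}:k]$. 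This is exactly the content of the cited theorem, so you have replaced a citation by a self-contained proof — more elementary and instructive, at the cost of a longer argument. One small point worth making explicit when you write it up: the identification $e(D)=e_D(xD;D)=\length_D(D/xD)$ uses that $xD$ is a minimal reduction of $\fm_D$ (from Remark~\ref{reductionsI}) and that $D$ is a one-dimensional Cohen-Macaulay domain with $x$ a parameter, and the $D$-length of $Q^iV_Q/Q^{i+1}V_Q\cong k_{V_Q}$ equals $[k_{V_Q}:k]$ because the residue field of $D$ is $k$.
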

\begin{proof}
Any maximal ideal $Q$ of $V$ contracts to $\fm_D$ through the natural
inclusion $D\subseteq V$, so $\fm_DV\subseteq Q$. Therefore, in $V_Q$,
on applying $\nu_Q$ to the equalities $x^{c_{1}}=y^{b_{2}}z^{a_{3}}$,
$y^{c_{2}}=x^{a_{1}}z^{b_{3}}$ and $z^{c_{3}}=x^{b_{1}}y^{a_{2}}$, one
gets $(\nu_Q(x),\nu_Q(y),\nu_Q(z))=\eta (m_1,m_2,m_3)$, for some
non-zero rational number $\eta=\eta(Q)$ depending on $Q$ and $\fp$
(see \cite[Remark~4.4]{op2}). Write $\eta=u/v$, with
$u,v\in\bn_{+}$. Then $v(\nu_Q(x),\nu_Q(y),\nu_Q(z))=u(m_1,m_2,m_3)$
and, on taking the greatest common divisor, one has
$v\gcd(\nu_Q(x),\nu_Q(y),\nu_Q(z))=u \gcd(m_1,m_2,m_3)=u$. So
$\gcd(\nu_Q(x),\nu_Q(y),\nu_Q(z))=u/v$ and $\eta=u/v\in\bn_+$.

By Remark~\ref{reductionsI}, $e(D)=\length_R(R/(xR+\fp))$. If
$\length_R(R/(xR+\fp))=1$, then $\fm=xR+\fp$ and $R/\fp$ is a DVR with
valuation $\nu$, say, and uniformizing parameter $x$ (by abuse of
notation), so $\nu(x)=1$. Applying $(a)$, this forces $m_1=1$, which
is in contradiction to $m_1\geq 3$. This proves $(b)$.

Suppose that $R$ is Nagata. Applying Remark~\ref{reductionsI} and
\cite[Theorem~11.2.7]{hs},
\begin{eqnarray*}
e(D)=e_D(xD;D)=\sum_Q e_{V_Q}(xV_Q;V_Q)[k_{V_Q}:k],
\end{eqnarray*}
where $Q$ runs over the maximal ideals of $V$. Applying $(a)$,
$(\nu_Q(x),\nu_Q(y),\nu_Q(z))=\eta (m_1,m_2,m_3)$, for some
$\eta=\eta(Q)\in\bn_+$. In particular,
$e_{V_Q}(xV_Q;V_Q)=\length(V_Q/xV_Q)=\eta(Q)m_1$. Therefore
\begin{eqnarray*}\label{mult}
e(D)=e_D(xD;D)=\sum_Q e_{V_Q}(xV_Q;V_Q)[k_{V_Q}:k]=m_1\sum_Q
\eta(Q)[k_{V_Q}:k]=m_1\sigp,
\end{eqnarray*}
where $\sigp=\sum_Q \eta(Q)[k_{V_Q}:k]$. Hence $e(D)=m_1\sigp\geq
m_1$, and $e(D)=m_1$ is equivalent to $\sigp=1$. Moreover, $\sigp=1$
is equivalent to $V$ being local and so a DVR with valuation $\nu$
say, $\eta=1$ (i.e., $\nu(x)=m_1$, $\nu(y)=m_2$ and $\nu(z)=m_3$) and
$[k_V:k]=1$. Furthermore, in this case, $D$ is analytically
irreducible since the $\fm_D$-adic completion of $D$ can be seen as a
subring in the $\fm_V$-adic completion of $V$, which is a DVR, whence
a domain. (For the converse statement, see \cite[p.~486,
  Section~1]{matsuoka}.)
\end{proof}

Given a numerical semigroup $\mcs$ with Frobenius number $F(\mcs)$,
set $N(\mcs)=\{ s\in\mcs\mid s<F(\mcs)\}$ and $n(\mcs)=|N(\mcs)|$ its
cardinality. Note that $g(\mcs)+n(\mcs)=F(\mcs)+1$. Since $g(\mcs)\geq
(F(\mcs)+1)/2$, it follows that $(F(\mcs)+1)\geq 2n(\mcs)$ (see
\cite[just before Proposition~2.26]{rgs}).

\begin{proposition}\label{notGor}
Suppose that $R$ is Nagata and that $\mcs(I)$ is not contained in any
symmetric semigroup $\mcs$ with $\mult(\mcs)=m_1$. Let $\fp$ be a
minimal prime over $I$ such that $e(D)=m_1$. Then $D$ is not
Gorenstein.
\end{proposition}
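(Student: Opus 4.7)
The plan is a proof by contradiction based on Kunz's classical theorem: for a one-dimensional analytically irreducible Noetherian local domain whose integral closure is residually rational, being Gorenstein is equivalent to the symmetry of the value semigroup (E.~Kunz, \emph{The value-semigroup of a one-dimensional Gorenstein ring}, Proc.~AMS 25 (1970), 748--751). The aim is to assume $D$ Gorenstein and then manufacture a symmetric numerical semigroup of multiplicity $m_1$ containing $\mcs(I)$, which is forbidden by hypothesis.

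First I would unpack what $e(D)=m_1$ provides. By Proposition~\ref{eD}(d)--(e), together with the Nagata hypothesis on $R$, the integral closure $V=\overline{D}$ is a DVR with valuation $\nu$, $k_V=k$, and $\eta=1$; in particular $\nu(x)=m_1$, $\nu(y)=m_2$, $\nu(z)=m_3$, and $D$ is analytically irreducible. Since $R$ is Nagata, $V$ is moreover a finite $D$-module, so the conductor of $D$ in $V$ is a nonzero ideal. Consequently the set $v(D):=\{\nu(d) : d\in D\setminus\{0\}\}$ has finite complement in $\bn$, so $v(D)$ is a numerical semigroup; and it contains $m_1,m_2,m_3$, whence $\mcs(I)=\langle m_1,m_2,m_3\rangle\subseteq v(D)$.

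The key numerical fact I would then verify is $\mult(v(D))=m_1$. By Remark~\ref{reductionsI} the class of $x$ is a minimal reduction of $\fm_D$, so every $d\in\fm_D$ is integral over $xD$; hence $d/x\in V$, and because $V$ is a DVR, $\nu(d)\geq\nu(x)=m_1$. Since $m_1\in v(D)$, this yields $\min(v(D)\setminus\{0\})=m_1$, i.e.\ $\mult(v(D))=m_1$.

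Finally, assume for contradiction that $D$ is Gorenstein. The hypotheses of Kunz's theorem hold by the unpacking step ($D$ analytically irreducible, $V$ a DVR, $k_V=k$), so $v(D)$ is symmetric. Combined with $\mult(v(D))=m_1$ and $\mcs(I)\subseteq v(D)$, this contradicts the hypothesis that $\mcs(I)$ is not contained in any symmetric semigroup of multiplicity $m_1$. I anticipate no real obstacle beyond the multiplicity check above; everything else is a direct translation from the structural information delivered by Proposition~\ref{eD} combined with one invocation of Kunz's theorem.
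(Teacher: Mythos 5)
Your proof is correct and is essentially the paper's argument in a different packaging: the paper establishes the same chain $\mcs(I)\subseteq\nu(D)$ with $\mult(\nu(D))=m_1$ and then assembles the equivalence ``Gorenstein $\Leftrightarrow$ symmetric value semigroup'' from the length identities $\length_V(V/(D:_DV))=F(\nu(D))+1$, $\length_D(D/(D:_DV))=n(\nu(D))$ (via Matsuoka and Barucci--Dobbs--Fontana) together with the conductor criterion \cite[Theorem~12.2.2]{hs}, whereas you invoke Kunz's theorem directly, which is precisely this equivalence. Your verification that $\mult(v(D))=m_1$ (using $\overline{xD}=\fm_D$ to force $\nu(d)\geq\nu(x)=m_1$ for $d\in\fm_D$) is sound and matches the paper's step $\fm_D\subseteq\fm_D V=xV=t^{m_1}V$.
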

\begin{proof}
Observe that $D$ cannot be a DVR since $m_1\geq 3$. Hence the
conductor $(D:_DV)\subseteq\fm_D$, where $V=\overline{D}$. By
Remark~\ref{reductionsI}, $xD$ is a minimal reduction of $\fm_D$, so
$\overline{xD}=\fm_D$ (see \cite[Corollary~1.2.5]{hs}). By
\cite[Theorem 6.8.1]{hs}, $\fm_D\subseteq
\fm_DV=\overline{(xD)}V=xV$. By Proposition~\ref{eD},~$(d)$, $V$ is a
DVR with uniformizing parameter $t$ and valuation $\nu$, say, and
$\nu(x)=m_1$, $\nu(y)=m_2$ and $\nu(z)=m_3$. In particular, the
numerical semigroup $\langle m_1,m_2,m_3\rangle$ is contained in the
numerical semigroup $\nu(D)$. Moreover, $xV=t^{m_1}V$ and
$(D:_DV)\subseteq\fm_D\subseteq \fm_DV=xV=t^{m_1}V$.  Therefore,
$\fm_D\subseteq t^{m_1}V$ and
\begin{eqnarray}\label{inclusions}
\mcs(I)=\langle m_1,m_2,m_3\rangle\subseteq \nu(D)\subseteq
\{0\}\cup\{ n\in\bz, n\geq m_1\}.
\end{eqnarray}
Thus, $\nu(D)$ is a numerical semigroup containing $\mcs(I)$ and of
multiplicity $\mult(\nu(D))=m_1$. By hypothesis, $g(\nu(D))>
(F(\nu(D))+1)/2$ or, equivalently, $(F(\nu(D))+1)> 2n(\nu(D))$.

By Proposition~\ref{eD},~$(d)$, $k$ is residually rational. Applying
\cite[Remark, Page 40]{bdf} (see also \cite[Proposition~1]{matsuoka}),
we obtain $\length_V(V/(D:_DV))=F(\nu(D))+1$ and
$\length_D(D/(D:_DV))=n(\nu(D))$.  In particular,
$\length_V(V/(D:_DV))>2\length_D(D/(D:_DV))$ and, by
\cite[Theorem~12.2.2]{hs}, $D$ cannot be Gorenstein.
\end{proof}

Now let $\fp$ run through $\Min(R/I)$, the set of minimal primes over
$I$. Let $n_I$ be the cardinality of $\Min(R/I)$. For each minimal
prime $\fp$ over $I$, set $\lp=\length_{R_{\mathfrak{p}}}
(R_{\mathfrak{p}}/I_{\mathfrak{p}})$. Recall from
Proposition~\ref{eD}, $(c)$, that $e(R/\fp)=m_1\sigp$.

\begin{corollary}\label{cases}
Suppose that $R$ is Nagata. Then $e(R)=\sum
_{\mathfrak{p}}\sigp\lp$. In particular, $n_I\leq e(R)$. Moreover, for
small values of $e(R)$, we have the following possibilities.
\begin{itemize}
\item[$(a)$] If $e(R)=1$, then $n_I=1$, $\Min(R/I)=\{\fp\}$,
  $(\sigp,\lp)=(1,1)$ and $I=\fp$ is prime with $e(R/\fp)=m_1$.
\item[$(b)$] Suppose that $e(R)=2$. Then either
\begin{itemize}
\item[$(b.1)$] $n_I=1$, $\Min(R/I)=\{\fp\}$, $(\sigp,\lp)=(2,1)$ and
  $I=\fp$ is prime with $e(R/\fp)=2m_1$, or
\item[$(b.2)$] $n_I=1$, $\Min(R/I)=\{\fp\}$, $(\sigp,\lp)=(1,2)$ and
  $I$ is $\fp$-primary with $e(R/\fp)=m_1$, or
\item[$(b.3)$] $n_I=2$, $\Min(R/I)=\{\fp_1,\fp_2\}$,
  $(\sigma_{\mathfrak{p}_i},l_{\mathfrak{p}_i})=(1,1)$ for $i=1,2$,
  and $I=\fp_1\cap \fp_2$ with each $e(R/\fp_i)=m_1$.
\end{itemize}
\item[$(c)$] Suppose that $e(R)=3$. Then either
\begin{itemize}
\item[$(c.1)$] $n_I=1$, $\Min(R/I)=\{\fp\}$, $(\sigp,\lp)=(3,1)$ and
  $I=\fp$ is prime with $e(R/\fp)=3m_1$, or
\item[$(c.2)$] $n_I=1$, $\Min(R/I)=\{\fp\}$, $(\sigp,\lp)=(1,3)$ and
  $I$ is $\fp$-primary with $e(R/\fp)=m_1$, or
\item[$(c.3)$] $n_I=2$, $\Min(R/I)=\{\fp_1,\fp_2\}$,
  $(\sigma_{\mathfrak{p}_1},l_{\mathfrak{p}_1})=(1,2)$,
  $(\sigma_{\mathfrak{p}_2},l_{\mathfrak{p}_2})=(1,1)$ and
  $I=\fq_1\cap \fp_2$ with $\fq_1$ a $\fp_1$-primary ideal and
  each $e(R/\fp_i)=m_1$, or
\item[$(c.4)$] $n_I=2$, $\Min(R/I)=\{\fp_1,\fp_2\}$,
  $(\sigma_{\mathfrak{p}_1},l_{\mathfrak{p}_1})=(2,1)$,
  $(\sigma_{\mathfrak{p}_2},l_{\mathfrak{p}_2})=(1,1)$ and
  $I=\fp_1\cap \fp_2$ with $e(R/\fp_1)=2m_1$ and $e(R/\fp_2)=m_1$, or
\item[$(c.5)$] $n_I=3$, $\Min(R/I)=\{\fp_1,\fp_2,\fp_3\}$,
  $(\sigma_{\mathfrak{p}_i},l_{\mathfrak{p}_i})=(1,1)$ for $i=1,2,3$,
  and $I=\fp_1\cap \fp_2\cap \fp_3$ with each $e(R/\fp_i)=m_1$.
\end{itemize}
\end{itemize}
In particular, if $e(R)\leq 3$, then either $I$ is prime, or else
there exists a minimal prime $\fp$ over $I$ such that $e(D)=m_1$, with
$D$ not Gorenstein, provided that $\mcs(I)$ is not contained in any
symmetric semigroup $\mcs$ with $\mult(\mcs)=m_1$.
\end{corollary}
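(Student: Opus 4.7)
The plan is to derive the master identity $e(R)=\sum_\fp \sigp\,\lp$ and then enumerate its positive-integer solutions. For the identity, I would apply the associativity formula for multiplicity to the $R$-module $R/I$: since $R/I$ is one-dimensional Cohen-Macaulay by Remark~\ref{reductionsI} and every $\fp\in\Min(R/I)$ has $\dim R/\fp=1$, the formula reads
\[
e(R/I)\;=\;\sum_{\fp\in\Min(R/I)}\length_{R_\fp}\bigl((R/I)_\fp\bigr)\,e(R/\fp)\;=\;\sum_{\fp}\lp\,e(R/\fp).
\]
Substituting $e(R/I)=m_1e(R)$ from Lemma~\ref{eRIism1e} and $e(R/\fp)=m_1\sigp$ from Proposition~\ref{eD}\,$(c)$ (which uses the Nagata hypothesis) and cancelling $m_1$ yields $e(R)=\sum_\fp \sigp\,\lp$. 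Since each $\sigp,\lp\geq 1$, this immediately forces $n_I\leq e(R)$.

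Next I would enumerate the ordered decompositions of $e(R)\in\{1,2,3\}$ as a sum, indexed by $\Min(R/I)$, of products of two positive integers; this yields one configuration for $e(R)=1$, three for $e(R)=2$ and five for $e(R)=3$, matching the cases $(a)$, $(b.1)$--$(b.3)$ and $(c.1)$--$(c.5)$. To translate each arithmetic configuration into the stated ideal-theoretic picture, I would use two facts. First, $R/I$ is Cohen-Macaulay of dimension one, hence $\Ass(R/I)=\Min(R/I)$ and $I$ is the intersection of its $\fp$-primary components. Second, whenever $\lp=1$ we have $I_\fp=\fp R_\fp$, so the $\fp$-primary component of $I$ is $\fp$ itself. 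Thus when $n_I=1$ one reads off $I=\fp$ if $\lp=1$ and $I$ is $\fp$-primary but not prime if $\lp>1$; when $n_I\geq 2$, every prime with $\lp=1$ contributes itself as its own component in the intersection, while the others contribute a larger primary component. The value of $e(R/\fp)$ in each case is recovered as $m_1\sigp$.

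For the final ``in particular'' clause I would simply scan the list: $I$ is prime precisely in cases $(a)$, $(b.1)$ and $(c.1)$. In each of the remaining cases the enumeration forces at least one minimal prime $\fp$ to have $\sigp=1$, equivalently $e(R/\fp)=m_1$. For any such $\fp$, the hypothesis that $\mcs(I)$ is not contained in any symmetric semigroup of multiplicity $m_1$ allows me to invoke Proposition~\ref{notGor}, which gives that $D=R/\fp$ is not Gorenstein, completing the proof.

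The main obstacle I anticipate is not conceptual but organisational: making sure the associativity formula is applied to the right object (one wants $e(\fm;R/I)$ as an $R$-multiplicity, which coincides with $e(R/I)$ since $\fm(R/I)=\fm R/I$ is the maximal ideal of $R/I$ and a minimal reduction is inherited), and checking in each of the eight nontrivial subcases that the claimed primary decomposition of $I$ is determined by the numerical invariants $(\sigp,\lp)$ alone via the two facts above.
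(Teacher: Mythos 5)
Your proposal is correct and follows essentially the same route as the paper: establish $e(R)=\sum_\fp\sigp\lp$ by combining Lemma~\ref{eRIism1e}, the associativity formula applied to the parameter $x$ for the one-dimensional module $R/I$, and Proposition~\ref{eD}\,$(c)$; then enumerate the ordered decompositions, read off the primary decomposition from the unmixedness of $I$ and from the local lengths; and close by observing that in every non-prime case some $\fp$ has $\sigp=1$, so Proposition~\ref{notGor} applies. The only cosmetic difference is that the paper writes the multiplicities as $e_{R/I}(xR/I;R/I)$ and $e_{R/\fp}(xR/\fp;R/\fp)$ rather than $e(R/I)$ and $e(R/\fp)$, but Remark~\ref{reductionsI} identifies these, exactly as you note.
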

\begin{proof}
By Lemma~\ref{eRIism1e}, the Associativity Law of Multiplicities and
Proposition~\ref{eD},
\begin{eqnarray*}
m_1e(R)=e(R/I)=e_{R/I}(xR/I;R/I)=\sum
_{\mathfrak{p}}e_{R/\mathfrak{p}}(xR/\fp;R/\fp)
\length_{R_{\mathfrak{p}}}(R_{\mathfrak{p}}/I_{\mathfrak{p}})=m_1\sum
_{\mathfrak{p}}\sigp\lp.
\end{eqnarray*}
Thus $e(R)=\sum _{\mathfrak{p}}\sigp\lp$. In particular, $n_I\leq
e(R)$. If $e(R)=1$, one deduces that $I$ has a unique minimal prime
$\fp$ and that, for such $\fp$,
$\length_{R_{\mathfrak{p}}}(R_{\mathfrak{p}}/I_{\mathfrak{p}})=1$, so
$I=\fp$. (See \cite[Proposition~2.6]{gop}; recall that, for a
Cohen-Macaulay local ring $R$, $e(R)=1$ is equivalent to $R$ being a
regular local ring: cf. \cite[Theorem~40.6 and Corollary~25.3]{nagata}
or \cite[Exercise~11.8]{hs}.) The rest of the assertions follow
analogously. One finishes by applying Propositions~\ref{eD}, $(c)$, and
\ref{notGor}.
\end{proof}

\begin{example}\label{3,4,5}
Let $(R,\fm,k)$ be a Cohen-Macaulay, Nagata local ring, with $k$
infinite, and $\dim R=3$. Let $(x,y,z)$ be a minimal reduction of
$\fm$. Let $I=(x^3-yz,y^2-xz,z^2-x^2y)$. If $e(R)\leq 3$, then either
$I$ is prime, or else there exists a minimal prime $\fp$ over $I$ such
that $D$ is not Gorenstein with $e(D)=3$, these two cases overlapping
precisely when $e(R)=1$. (See Section~\ref{examples} to note that each
of the two possibilities can occur.) Moreover, in the latter case, $D$
is an almost Gorenstein ring and the canonical ideal $\omega_D$ of $D$
is minimally generated by two elements.
\end{example}
\begin{proof}
Note that $\mcs(I)=\langle 3,4,5\rangle$ is not contained in any
symmetric semigroup $\mcs$ with $\mult(\mcs)=3$. By
Corollary~\ref{cases}, either $I$ is prime, or else 
\begin{eqnarray}\label{2ndcase}
\mbox{ there exists a minimal prime $\fp$ over $I$ such that $e(D)=3$
  and $D$ is not Gorenstein.}
\end{eqnarray}
In the latter case (\ref{2ndcase}), by Proposition~\ref{eD}, such $D$
is analytically irreducible.

Suppose that (\ref{2ndcase}) holds. Then the chain of inclusions
(\ref{inclusions}) in Proposition~\ref{notGor} must be a chain of
equalities, so $\nu(D)=\langle 3,4,5\rangle$. Note that
$F(\nu(D))=2$. So $\length_V(V/(D:_DV))=F(\nu(D))+1=3$. Since $V$ is a
DVR, it follows that $(D:_DV)=t^3V$, so $(D:_DV)=\fm_D=xV=t^3V$. In
particular, $\fm_DV\subseteq D$ and $D$ is an almost Gorenstein ring
(see \cite[Corollary~3.12]{gmp}; see also \cite{bf}).

Since $(D:_DV)=\fm_D$, then $\length_D(D/(D:_DV))=1$. Furthermore, $D$
analytically irreducible implies that $D$ admits a canonical ideal
$\omega_D$ (see, e.g., \cite[Proposition~2.7]{gmp}). By
\cite[Theorem~12.2.3]{hs}, $3=\length_D(V/(D:_DV))\geq
2\length_D(D/(D:_DV))+\mu(\omega_D)-1=1+\mu(\omega_D)$, where $\mu$
stands for ``minimal number of generators''.  Therefore,
$\mu(\omega_D)\leq 2$. Since $D$ is not Gorenstein, this forces
$\mu(\omega_D)=2$ (alternatively, this follows also from the
definition of ``almost Gorenstein'' in \cite[page~418]{bf}).
\end{proof}

\section{Main theorem}\label{mainresult}

Now, we can state and prove the main result of the paper. We keep the
same notations.

\begin{theorem}\label{mainT}
Let $(R,\fm,k)$ be a Gorenstein, Nagata local ring, with $k$ infinite,
and $\dim R=3$. Let $(x,y,z)$ be a minimal reduction of $\fm$. Let
$I=(x^{c_{1}}-y^{b_{2}}z^{a_{3}},y^{c_{2}}-x^{a_{1}}z^{b_{3}},z^{c_{3}}-x^{b_{1}}y^{a_{2}})$.
Suppose that $\mcs(I)=\langle m_1,m_2,m_3\rangle$ is not contained in
any symmetric semigroup $\mcs$ with $\mult(\mcs)=m_1$. If $e(R)\leq 3$,
then either $I$ is prime, or else there exists a minimal prime $\fp$
over $I$ such that $\fp$ is not a complete intersection.
\end{theorem}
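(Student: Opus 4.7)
The plan is to reduce to the corollary already established and then exploit the Gorenstein hypothesis on $R$, which is the only new ingredient compared to Corollary~\ref{cases}.

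First I would invoke Corollary~\ref{cases} directly. Since $R$ is Gorenstein it is in particular Cohen--Macaulay and Nagata, and the hypothesis on $\mcs(I)$ is exactly that of Corollary~\ref{cases}. Thus either $I$ itself is prime (and we are done), or there exists a minimal prime $\fp$ over $I$ such that $D=R/\fp$ satisfies $e(D)=m_1$ and $D$ is \emph{not} Gorenstein.

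Second, in the non-prime case, I would show that this $\fp$ cannot be a complete intersection. Recall that any minimal prime over $I$ has height $2$, since $I$ is a height-unmixed ideal of height two. Suppose for contradiction that $\fp$ is a complete intersection; then $\fp$ is generated by a regular $R$-sequence of length $2$. Since $R$ is Gorenstein, a quotient of $R$ by a regular sequence is again Gorenstein (see, e.g., \cite{bh}), so $D=R/\fp$ would be Gorenstein, contradicting the conclusion of Corollary~\ref{cases}. Therefore $\fp$ is not a complete intersection.

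There is essentially no obstacle: the substantive work has already been done in Proposition~\ref{notGor} and Corollary~\ref{cases}, where the failure of Gorensteinness of $D$ was extracted from the semigroup hypothesis via the length comparison $\length_V(V/(D:_DV))>2\length_D(D/(D:_DV))$ and \cite[Theorem~12.2.2]{hs}. The role of strengthening ``$R$ Cohen--Macaulay'' to ``$R$ Gorenstein'' in the statement of the main theorem is precisely to convert the non-Gorenstein conclusion on $R/\fp$ into the non-complete-intersection conclusion on $\fp$, via the standard fact recalled above.
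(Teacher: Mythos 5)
Your argument is correct and is essentially the paper's own proof: invoke Corollary~\ref{cases} to reduce to the case where some minimal prime $\fp$ has $D=R/\fp$ non-Gorenstein, then use the standard fact that a regular-sequence quotient of a Gorenstein ring is Gorenstein (the paper cites \cite[Proposition~3.1.19]{bh}) to rule out $\fp$ being a complete intersection.
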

\begin{proof}
By Corollary~\ref{cases}, either $I$ is prime, or else there exists a
minimal prime $\fp$ over $I$ such that $D$ not Gorenstein. In
particular, since $R$ is Gorenstein, $\fp$ cannot be a complete
intersection (\cite[Proposition~3.1.19]{bh}).
\end{proof}

The following result clarifies the hypothesis ``$\mcs(I)$ not
contained in any symmetric semigroup $\mcs$ with $\mult(\mcs)=m_1$''.
Let $\mct$ be the numerical semigroup $\mct=\langle
m_1,m_2,m_3\rangle$ with $3\leq m_1\leq m_2\leq m_3$ and
$\gcd(m_1,m_2,m_3)=1$. In particular $\mult(\mct)=m_1$ and
$\embed(\mct)\leq 3$. If $\embed(\mct)=2$, then $\mct$ is symmetric
(see \cite[Corollary~4.5]{rgs}). Therefore, in order to fulfill the
hypotheses of Proposition~\ref{notGor} and Theorem~\ref{mainT}, we can
suppose that $\embed(\mct)=3$. Hence $m_1<m_2<m_3$.

\begin{proposition}\label{semigroups}
Let $\mct=\langle m_1,m_2,m_3\rangle$ be a numerical semigroup with
$3\leq m_1<m_2<m_3$ and $\gcd(m_1,m_2,m_3)=1$. Suppose that
$\embed(\mct)=3$. Let $\Delta=\{ \langle 3,4,5\rangle, \langle
3,5,7\rangle, \langle 4,5,7\rangle, \langle 4,7,9\rangle\}$.  Then
$\mct$ is not contained in any symmetric semigroup $\mcs$ with
$\mult(\mcs)=m_1$ if and only if $\mct\in \Delta$.
\end{proposition}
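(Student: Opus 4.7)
The plan is to prove both directions through a case analysis organised around the multiplicity $m_1$, using a distinguished family of symmetric semigroups as the primary tool. Define
\begin{eqnarray*}
\mcs_{\ast} = \langle m_1, m_1+1, \ldots, 2m_1-2\rangle.
\end{eqnarray*}
A short computation shows its element set is $\{0\}\cup[m_1,2m_1-2]\cup[2m_1,\infty)$, since the only integer in $[m_1,\infty)$ not representable as a nonnegative sum of the generators is $2m_1-1$. Hence $G(\mcs_{\ast}) = \{1,\ldots,m_1-1,\,2m_1-1\}$, so $F(\mcs_{\ast}) = 2m_1-1$ and $g(\mcs_{\ast}) = m_1 = (F(\mcs_{\ast})+1)/2$, and $\mcs_{\ast}$ is symmetric of multiplicity $m_1$. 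Consequently $\mct \subseteq \mcs_{\ast}$ exactly when $2m_1-1 \notin \{m_2,m_3\}$, which disposes of much of the ``if'' direction immediately.

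For the ``if'' direction, assume $\mct \notin \Delta$ and exhibit a symmetric $\mcs$ of multiplicity $m_1$ with $\mct \subseteq \mcs$. If $2m_1-1 \notin \{m_2,m_3\}$, take $\mcs = \mcs_{\ast}$. Otherwise split on $m_1$: for $m_1=3$, the constraint $5 \in \{m_2,m_3\}$ together with $\embed(\mct)=3$ forces $\mct \in \{\langle 3,4,5\rangle, \langle 3,5,7\rangle\}$, both already in $\Delta$, so nothing to prove. For $m_1=4$, enumerate all $\mct$ with $7 \in \{m_2,m_3\}$; since $\embed(\mct)=3$ forces $m_3 \in \{9,10,13,17\}$ when $m_2=7$ (the gaps of $\langle 4,7\rangle$ exceeding $7$), the list is $\langle 4,5,7\rangle,\langle 4,6,7\rangle,\langle 4,7,9\rangle,\langle 4,7,10\rangle,\langle 4,7,13\rangle,\langle 4,7,17\rangle$. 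I verify that $\langle 4,6,7\rangle$ and $\langle 4,7,10\rangle$ are themselves symmetric, with Ap\'ery sets $\{0,6,7,13\}$ and $\{0,7,10,17\}$ satisfying the sum relation $w_3=w_1+w_2$ characterising symmetric multiplicity-$4$ semigroups, and that $\langle 4,7,13\rangle, \langle 4,7,17\rangle \subseteq \langle 4,6,7\rangle$ since $13=6+7$ and $17=4+6+7$. For $m_1 \geq 5$, introduce the companion
\begin{eqnarray*}
\mcs_{\ast\ast} = \langle m_1, m_1+2, m_1+3, \ldots, 2m_1-1\rangle,
\end{eqnarray*}
and check its gap set is $\{1,\ldots,m_1-1,\,m_1+1,\,2m_1+1\}$, giving $F(\mcs_{\ast\ast})=2m_1+1$ and $g(\mcs_{\ast\ast})=m_1+1=(F(\mcs_{\ast\ast})+1)/2$, so again symmetric. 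This absorbs every $\mct$ with $2m_1-1 \in \{m_2,m_3\}$ and $\{m_1+1, 2m_1+1\} \cap \{m_2,m_3\}=\emptyset$. The only residual sub-cases are $\mct = \langle m_1, m_1+1, 2m_1-1\rangle$ and $\mct = \langle m_1, 2m_1-1, 2m_1+1\rangle$ (the third a-priori possibility $\mct = \langle m_1, m_1+1, 2m_1+1\rangle$ is ruled out by $\embed(\mct)=3$, since $2m_1+1=m_1+(m_1+1)$), and for these I produce a bespoke symmetric enlargement of $\mct$.

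For the ``only if'' direction, fix $\mct \in \Delta$. Any $\mcs \supseteq \mct$ with $\mult(\mcs)=m_1$ is obtained by adjoining to $\mct$ some subset of the finitely many gaps of $\mct$ lying in $[m_1,\infty)$ and closing under addition; this yields a finite list of candidate semigroups. For each candidate I compute $F(\mcs)$ and $g(\mcs)$ directly from the element set and verify $g(\mcs) > (F(\mcs)+1)/2$, ruling out symmetry.

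The main obstacle is the pair of residual sub-cases in the ``if'' direction for $m_1 \geq 5$: neither $\mcs_{\ast}$ nor $\mcs_{\ast\ast}$ contains $\langle m_1,m_1+1,2m_1-1\rangle$ or $\langle m_1,2m_1-1,2m_1+1\rangle$, and an explicit containing symmetric semigroup must be produced for every such $m_1$. For $\mct = \langle m_1, m_1+1, 2m_1-1\rangle$, for instance, one may take $\mcs = \mct$ itself when the Ap\'ery set of $\mct$ turns out to be symmetric (as happens for $m_1 \in \{5, 8\}$), or $\mcs = \langle 6,7,10,11\rangle$ for $m_1=6$ and $\mcs = \langle 7,8,10,13\rangle$ for $m_1=7$; the construction is visibly non-uniform and encodes the real combinatorial content of the proposition.
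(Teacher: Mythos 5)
Your overall strategy matches the paper's almost exactly: the same distinguished symmetric semigroup $\mcs_{\ast}=\langle m_1,m_1+1,\ldots,2m_1-2\rangle$ (the paper's $\mcs_1$ for $m_1\geq 5$), the same companion $\mcs_{\ast\ast}=\langle m_1,m_1+2,\ldots,2m_1-1\rangle$ (the paper's $\mcs_2$), the same casework for $m_1\in\{3,4\}$, and the same reduction to the two residual families $\langle m_1,m_1+1,2m_1-1\rangle$ and $\langle m_1,2m_1-1,2m_1+1\rangle$. The ``only if'' direction by brute-force enumeration of oversemigroups of the four members of $\Delta$ is also the route the paper indicates (``a simple check'').

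However, there is a genuine gap at precisely the point you flag as ``the main obstacle.'' For the two residual families with $m_1\geq 5$ you only exhibit symmetric enlargements for $\mct=\langle m_1,m_1+1,2m_1-1\rangle$ and only for $m_1\in\{5,6,7,8\}$; the second family $\langle m_1,2m_1-1,2m_1+1\rangle$ is not treated at all, and $m_1\geq 9$ is not treated at all. You then assert that ``the construction is visibly non-uniform,'' but this is incorrect: the paper closes both residual cases with uniform formulas valid for every $m_1\geq 5$, namely
\begin{eqnarray*}
\langle m_1,m_1+1,2m_1-1\rangle \ \subseteq\ \langle m_1,\,m_1+1,\,m_1+4,\,m_1+5,\,\ldots,\,2m_1-1\rangle,
\end{eqnarray*}
which is symmetric with Frobenius number $2m_1+3$, and
\begin{eqnarray*}
\langle m_1,2m_1-1,2m_1+1\rangle \ \subseteq\ \langle m_1,\,2m_1-1,\,2m_1+1,\,2m_1+2,\,\ldots,\,3m_1-4,\,3m_1-2\rangle,
\end{eqnarray*}
which is symmetric with Frobenius number $4m_1-3$. (You can verify both by a direct gap count: in the first, the gap set is $\{1,\ldots,m_1-1,\,m_1+2,\,m_1+3,\,2m_1+3\}$ of size $m_1+2=(F+1)/2$; in the second, it is $\{1,\ldots,m_1-1\}\cup\{m_1+1,\ldots,2m_1-3\}\cup\{3m_1-3,\,4m_1-3\}$ of size $2m_1-1=(F+1)/2$.) Your ad hoc choices for $m_1\in\{5,6,7,8\}$ are specialisations or variants of the first formula (e.g.\ your $\langle 7,8,10,13\rangle$ is a different valid choice from the paper's $\langle 7,8,11,12,13\rangle$, but a uniform recipe exists and must be produced). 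Until a construction covering all $m_1\geq 5$ for both residual families is supplied, the proof of the ``$\mct\notin\Delta\Rightarrow$ contained in symmetric'' direction is incomplete.

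A minor aside: the ``third a-priori possibility'' $\langle m_1,m_1+1,2m_1+1\rangle$ you dismiss does not actually arise in the residual analysis, since it has $2m_1-1\notin\{m_2,m_3\}$ and is already absorbed by $\mcs_{\ast}$; the remark about $\embed<3$ is correct but unnecessary.
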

\begin{proof}
The ``if'' implication is a simple check. We now prove the ``only if''
implication. Take $\mct=\langle m_1,m_2,m_3\rangle$ and suppose that
$\mct\not\in\Delta$. Let us show that $\mct$ is contained in a
symmetric semigroup $\mcs$ with $\mult(\mcs)=m_1$.

Observe that since $\embed(\mct)=3$, then $m_3\not\in\langle
m_1,m_2\rangle$ and $m_3>m_2$. For the sake of simplicity, set
$BG(m_1,m_2)=G( \langle m_1,m_2\rangle )\cap \{ m\in\bn_+, m>m_2\}$,
where $G(\langle m_1,m_2\rangle)$ is the set of gaps of $\langle
m_1,m_2\rangle$ ($BG$ standing for ``big gaps''). Thus $m_3\in
BG(m_1,m_2)$.

Suppose that $m_1=3$ and $m_2=4$. Then $m_3\in BG(m_1,m_2)=\{5\}$, in
contradiction to $\mct\not\in\Delta$. Analogously, if $m_1=3$ and
$m_2=5$, then $m_3\in BG(m_1,m_2)=\{7\}$, in contradiction to
$\mct\not\in\Delta$. Therefore, if $m_1=3$, then $m_2\geq 6$ and
$\mct\subseteq \langle 3,4\rangle=\{0,3,4,6,\mapsto\}$, which is
symmetric.

Suppose that $m_1=4$. Set $\mcs_1=\langle
4,5,6\rangle=\{0,4,5,6,8,\mapsto\}$, $\mcs_2=\langle
4,6,7\rangle=\{0,4,6,7,8,10,\mapsto\}$, which are symmetric. Let us
prove that either $\mct\subseteq \mcs_1$, or else $\mct\subseteq
\mcs_2$.  Indeed, if $m_2=5$, then $m_3\in
BG(m_1,m_2)=\{6,7,11\}$. Since $\mct\not\in\Delta$, $m_3\in\{6,11\}$
and $\mct\subseteq \mcs_1$. Suppose that $m_2=6$. If $m_3=9$,
$\mct\subseteq\mcs_1$. If $m_3\neq 9$, $\mct\subseteq \mcs_2$. Suppose
that $m_2=7$. Since $\mct\not\in\Delta$, then $m_3\neq 9$ and
$\mct\subseteq \mcs_2$. If $m_1=4$ and $m_2\geq 8$, then
$\mct\subseteq \mcs_1$.

Suppose that $m_1\geq 5$. Take $\mcs_1=\langle m_1,m_1+1,\ldots
,2m_1-2\rangle$ and $\mcs_2=\langle m_1,m_1+2,\ldots
,2m_1-1\rangle$. One can check that $F(\mcs_1)=2m_1-1$,
$F(\mcs_2)=2m_1+1$, and that $\mcs_1$ and $\mcs_2$ are symmetric.

If $m_2\geq 2m_1$, then $\mct\subseteq \mcs_1$. Suppose that
$m_2=2m_1-1$. If $m_3\neq 2m_1+1$, then $\mct\subseteq \mcs_2$. If
$m_3=2m_1+1$, then $\mct\subseteq \langle m_1,2m_1-1,2m_1+1,\ldots
,3m_1-4,3m_1-2\rangle$, which is symmetric (with Frobenius number
$4m_1-3$).

Suppose that $m_2\leq 2m_1-2$. If $m_3\neq 2m_1-1$, then
$\mct\subseteq \mcs_1$. If $m_3=2m_1-1$ and $m_2\neq m_1+1$, then
$\mct\subseteq \mcs_2$. Finally, if $m_2=m_1+1$ and $m_3=2m_1-1$,
$\mct\subseteq \langle m_1,m_1+1,m_1+4,\ldots ,2m_1-1\rangle$, which
is symmetric (with Frobenius number $2m_1+3$).
\end{proof}

\begin{remark}
Recall that $a=(a_1,a_2,a_3)\in\bn_+^3$, $b=(b_1,b_2,b_3)\in\bn_+^3$
and $c=a+b$. Moreover
$m_1=c_{2}c_{3}-a_{2}b_{3}=a_2a_3+a_3b_2+b_2b_3$,
$m_2=c_{1}c_{3}-a_{3}b_{1}=a_1a_3+a_1b_3+b_1b_3$, and
$m_3=c_{1}c_{2}-a_{1}b_{2}=a_1a_2+a_2b_1+b_1b_2$. It is easy to check
that the following four matrices
\begin{eqnarray*}
\mcm_1=\left(\begin{array}{llc}x^{}&y^{}&z^{}
  \\ y^{}&z^{}&x^{2}\end{array}\right)\mbox{ , }
\mcm_2=\left(\begin{array}{llc}x^{}&y^{}&z^{}
  \\ y^{}&z^{}&x^{3}\end{array}\right)\mbox{ , }
\mcm_3=\left(\begin{array}{llc}x^{2}&y^{2}&z^{}
  \\ y^{}&z^{}&x^{}\end{array}\right)\mbox{ , }
\mcm_4=\left(\begin{array}{llc}x^{3}&y^{2}&z^{}
  \\ y^{}&z^{}&x^{}\end{array}\right),
\end{eqnarray*}
give rise to the corresponding ideals of $2\times 2$ minors
\begin{eqnarray*}
&&I_1=(x^3-yz,y^2-xz,z^2-x^2y)\mbox{ , }
I_2=(x^4-yz,y^2-xz,z^2-x^3y)\mbox{ , }\\
&&I_3=(x^3-yz,y^3-x^2z,z^2-xy^2)\mbox{ , }
I_4=(x^4-yz,y^3-x^3z,z^2-xy^2)\mbox{ , }
\end{eqnarray*}
with $S(I_1)=\langle 3,4,5\rangle$, $S(I_2)=\langle 3,5,7\rangle$,
$S(I_3)=\langle 4,5,7\rangle$ and $S(I_4)=\langle 4,7,9\rangle$, the
four semigroups appearing in the set $\Delta$.

In fact, these are the only examples with prescribed semigroup in
$\Delta$. Indeed, if $m_1=3$, then $a_2$, $a_3$, $b_2$ and $b_3$ must
be equal to $1$. Substituting in the expressions of $m_2$ and $m_3$
leads to a $2\times 2$ system with solution $a_1=(1/3)(2m_2-m_3)$ and
$b_1=(1/3)(2m_3-m_2)$. If $m_2=4$ and $m_3=5$, then $a_1=1$ and
$b_1=2$. If $m_2=5$ and $m_2=7$, then $a_1=1$ and $b_1=3$.

If $m_1=4$, this forces either $a_2=2$ and $a_3$, $b_2$ and $b_3$
equal to $1$, or else $b_3=2$ and $a_2$, $a_3$ and $b_2$ equal to
$1$. If $a_2=2$, substituting in the expressions of $m_2$ and $m_3$,
one gets a $2\times 2$ system with solution $a_1=(1/4)(3m_2-m_3)$ and
$b_1=(1/2)(m_3-m_2)$. If $m_2=5$ and $m_3=7$, then $a_1=2$ and
$b_1=1$.  If $m_2=7$ and $m_3=9$, then $a_1=3$ and $b_1=1$. Finally,
if $b_3=2$, substituting in the expressions of $m_2$ and $m_3$, one
gets a $2\times 2$ system with solution $a_1=(1/2)(m_2-m_3)$ and
$b_1=(1/4)(3m_3-m_2)$. However $m_2<m_3$ would force $a_1<0$, which
makes no sense.
\end{remark}

\section{Examples}\label{examples}

Our next purpose is to display examples of each one of the cases in
Corollary~\ref{cases}. First we fix the notations for the rest of the
paper.

\begin{setting}\label{generalsetting}
Let $k$ be a field and let $X$, $Y$, $Z$, $W$, $t$ be indeterminates
over $k$. Set $A=k[X,Y,Z]$, $\fm_A=(X,Y,Z)A$ and
$S=A_{\mathfrak{m}_A}$, the localization of $A$ in $\fm_A$. Call
$\fm_S$ the maximal ideal of $S$. Take $a$, $b$, $c$ and $m\in\bn^3_+$
as in in Section~\ref{introduction} and suppose that $m_1<m_2<m_3$ and
$\gcd(m_1,m_2,m_3)=1$. Let
$J=(X^{c_{1}}-Y^{b_{2}}Z^{a_{3}},Y^{c_{2}}-X^{a_{1}}Z^{b_{3}},
Z^{c_{3}}-X^{b_{1}}Y^{a_{2}})A\subset \fm_A$. By
\cite[Theorem~7.8]{op2}, $J$ is a prime ideal of $A$. In fact,
$J=\ker(\varphi_m:A\to k[t])$, where $\varphi_m$ sends $X$, $Y$ and
$Z$ to $t^{m_1}$, $t^{m_2}$ and $t^{m_3}$, respectively. In
particular, $JS$ is a prime ideal of $S$.

Set $B=A[W]=K[X,Y,Z,W]$, $\fm_B=(X,Y,Z,W)B$ and
$T=B_{\mathfrak{m}_B}$, the localization of $B$ in $\fm_B$. Call
$\fm_T$ the maximal ideal of $T$. By abuse of notation, we consider
elements of $A$ to be elements of $B$ and elements of $S$ to be
elements of $T$. Let $n\geq 1$, $g_1,\ldots ,g_n$, with
$g_i\in\fm_A^i$ and $f=W^n+g_1W^{n-1}+\ldots
+g_n\in(WB+\fm_AB)^n$. Note that $JB+fB\subset \fm_B$.

We now specify our model for the ring $R$ and our model for the ideal
$I$ that will exemplify the results considered in the paper,
particularly as regards Theorem~\ref{mainT} and
Corollary~\ref{cases}. Take $R=T/fT$, the factor ring of $T$ modulo
$f$. Let $\fm_R$ denote the maximal ideal of $R$. Let lower-case
letters $x$, $y$, $z$, $w$ denote the corresponding image elements in
$R$. Thus $\fm_R=(x,y,z,w)R$ and clearly $(R,\fm_R,k)$ is a
Gorenstein, Nagata local ring of dimension $\dim R=3$. Since $w$ is
integral over the ideal $(x,y,z)R$, then $(x,y,z)R$ is a minimal
reduction of $\fm_R$. Now take $I=JR=(x^{c_{1}}-y^{b_{2}}z^{a_{3}},
y^{c_{2}}-x^{a_{1}}z^{b_{3}}, z^{c_{3}}-x^{b_{1}}y^{a_{2}})R$. Clearly
$e(R)=n$, by a standard result (see \cite[Example~11.2.8]{hs}, say);
alternatively, by calculation, since $x,y,z$ is a regular sequence in
$R$, then $e(R)=e_R((x,y,z)R;R)=\length_R(R/(x,y,z)R)$, so, setting
$T^{\prime}=T/(X,Y,Z)T$,
\begin{eqnarray*}
e(R)=\length_T(T/(X,Y,Z,f)T)=\length_{T^{\prime}}(T^{\prime}/W^nT^{\prime})=n.
\end{eqnarray*}
\end{setting}

Let us study the minimal primary decomposition of $I$ for different
particular choices of the element $f$. We start with the cases in
Corollary~\ref{cases} in which $I$ is prime.

\begin{example}\label{caseab1c1} 
{\sc Cases $(a)$, $(b.1)$ and $(c.1)$}. 
\begin{itemize}
\item[$(i)$] In Setting~\ref{generalsetting}, take
  $f=W^n-X^{n-1}Y$. When $n\in\{1,2\}$, take $m=(m_1,m_2,m_3)$ in $\{
  (3,4,5),(4,5,7),(4,7,9)\}$; when $n=3$, take $m$ in
  $\{(3,4,5),(3,5,7),(4,5,7)\}$. Note that for each choice of $n$ and
  $m$, $\gcd(nm_1,nm_2,nm_3,(n-1)m_1+m_2)=1$. Let $P=\ker(\psi)$,
  where $\psi:B\to k[t]$ sends $X$, $Y$, $Z$ and $W$ to $t^{nm_1}$,
  $t^{nm_2}$, $t^{nm_3}$ and $t^{(n-1)m_1+m_2}$, respectively. Then
  $P=JB+fB$. In particular, $JT+fT$ is a prime ideal of $T$. Thus
  $e(R)=n$ and $I$ is a prime ideal of $R$.
\item[$(ii)$] Take $f=W^n-X^{n-1}Z$, $n=3$, in
  Setting~\ref{generalsetting} and take $m$ in $\{
  (3,4,5),(3,5,7),(4,7,9)\}$. Note that for each choice of $m$,
  $\gcd(nm_1,nm_2,nm_3,(n-1)m_1+m_3)=1$. Let $P=\ker(\psi)$, where
  $\psi:B\to k[t]$ sends $X$, $Y$, $Z$ and $W$ to $t^{nm_1}$,
  $t^{nm_2}$, $t^{nm_3}$ and $t^{(n-1)m_1+m_3}$, respectively. Then
  $P=JB+fB$. In particular, $JT+fT$ is a prime ideal of $T$. Thus
  $e(R)=n$ and $I$ is a prime ideal of $R$.
\end{itemize}
\end{example}
\begin{proof}
$(i)$ It suffices to adapt the proofs of \cite[Remark~7.2, Lemma~7.5
    and Theorem~7.8]{op2} to the ring $B$ and the ideal $JB+fB$, with
  the variables $X$, $Y$, $Z$ and $W$ being given weights $nm_1$,
  $nm_2$, $nm_3$ and $(n-1)m_1+m_2$, respectively. In this regard,
  note that $JB+fB$ is unmixed, since $J(B/fB)$ is unmixed by
  \cite[Proposition~2.2, $(b)$]{op2}.

$(ii)$ This follows similarly, with the variables $X$, $Y$, $Z$ and
  $W$ now given weights $nm_1$, $nm_2$, $nm_3$ and $(n-1)m_1+m_3$,
  respectively.
\end{proof}

An example covering Case $(b.1)$ when $m=(3,5,7)$ is shown in
Example~\ref{case357}. Before proceeding, we need some prior
observations.

\begin{remark}\label{varphi}
Take $g\in \fm_A$. Then $g$ defines a surjective evaluation map
$\varphi_g:B\to A$, where $\varphi_g$ fixes $k$, $X$, $Y$ and $Z$ and
sends $W$ to $g$. Note that if $p\in B\setminus\fm_B$, then
$p(0,0,0,g(0,0,0))=p(0,0,0,0)\neq 0$, so $\varphi_g(p)\in
A\setminus\fm_A$, and if $q\in A\setminus\fm_A$, then $q\in
B\setminus\fm_B$ and $\varphi_g(q)=q$. In particular, $\varphi_g$ can
be extended to a morphism, $\varphi_g:T\to S$, say, that is a
retraction of the natural inclusion $S\subset T$.
\end{remark}

\begin{lemma}\label{kernel}
Let $g\in\fm_A$. Then $\ker(\varphi_g:B\to A)=(W-g)B$ and
$\ker(\varphi_g:T\to S)=(W-g)T$. In particular, $JB+(W-g)B$ is a prime
ideal of height $3$ in $B$ and $JT+(W-g)T$ is a prime ideal of height
$3$ in $T$.
\end{lemma}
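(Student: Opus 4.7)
The plan is to prove the four assertions in order, reducing each localized or height statement back to the polynomial-ring case.

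First I would establish $\ker(\varphi_g\colon B\to A)=(W-g)B$ by the standard argument that $W-g\in B=A[W]$ is monic in $W$, so every $p\in B$ admits a unique division $p=(W-g)q+r$ with $q\in B$ and $r\in A$; applying $\varphi_g$ gives $\varphi_g(p)=r$, whence $\varphi_g(p)=0$ forces $r=0$ and $p\in(W-g)B$. The reverse inclusion is immediate.

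Next I would deduce $\ker(\varphi_g\colon T\to S)=(W-g)T$ from the polynomial case together with Remark~\ref{varphi}. If $p/q\in T$ lies in the kernel with $p\in B$ and $q\in B\setminus\fm_B$, then by Remark~\ref{varphi} we have $\varphi_g(q)\in A\setminus\fm_A$, so $\varphi_g(p)=0$ in $A$. By the first part $p\in(W-g)B$, hence $p/q\in(W-g)T$. The reverse inclusion is trivial.

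For the primality of $JB+(W-g)B$, the map $\varphi_g$ induces an isomorphism $B/(W-g)B\cong A$ identifying $(JB+(W-g)B)/(W-g)B$ with $J$. Since $J$ is prime in $A$ (noted in Setting~\ref{generalsetting}), so is $JB+(W-g)B$ in $B$. The same argument, using the induced isomorphism $T/(W-g)T\cong S$ and the primality of $JS$ in $S$, yields that $JT+(W-g)T$ is prime in $T$.

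Finally, for the heights, I would use the dimension formula. The ring $B=k[X,Y,Z,W]$ is a $4$-dimensional Cohen-Macaulay ring, while $B/(JB+(W-g)B)\cong A/J$ has dimension~$1$, since $J$ is a height-$2$ prime in the $3$-dimensional ring $A$. Hence $JB+(W-g)B$ has height~$3$ in $B$. Passing to the localization $T$, the isomorphism $T/(JT+(W-g)T)\cong S/JS$ again gives a quotient of dimension~$1$ in the $4$-dimensional regular local ring $T$, so $JT+(W-g)T$ has height~$3$ in $T$. No serious obstacle is anticipated here; the only point requiring care is the transition to the localization, but it is handled cleanly by Remark~\ref{varphi}.
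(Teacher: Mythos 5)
Your proposal is correct and follows essentially the same route as the paper: Division Algorithm for the kernel in $B$, passing to the localization (you spell this out via Remark~\ref{varphi}, while the paper simply cites flatness of localization), primality via the isomorphism $B/(W-g)B\cong A$ identifying the quotient ideal with the prime $J$, and the height computation via catenarity of the polynomial ring (the paper adds $\height(W-g)B=1$ to $\height J=2$ rather than subtracting $\dim A/J$ from $\dim B$, but these are the same calculation). No gaps.
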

\begin{proof} That $\ker(\varphi_g:B\to A)=(W-g)B$ follows easily 
from the appropriate Division Algorithm. The second assertion follows
since localisation is a flat functor, so kernels are preserved. In
particular, since $JA$ is a prime of height $2$ in $A$ and
$\varphi_g(JB)=JA$, then (via $\varphi_g^{-1}$) $JB+(W-g)B/(W-g)B$ is
a prime of height $2$ in $B/(W-g)B$, so $JB+(W-g)B$ is a prime ideal
of height $3$ in $B$ because $W-g$ is prime in $B$. Analogously,
$JT+(W-g)T$ is a prime ideal of height $3$ in $T$.
\end{proof}

Next we note some elementary facts about lifting a minimal primary
decomposition over an ideal. We shall use these facts below without
explicit mention.

\begin{remark}\label{generalfacts}
Let $L,K$ be ideals in a Noetherian ring $C$ such that $L\supseteq
K$. For $i=1,\ldots,r$, consider ideals $Q_i$ and $P_i$ with
$P_i\supseteq Q_i\supseteq L$ such that in $C/K$ we have the minimal
primary decomposition $L/K=\cap_iQ_i/K$, where each $P_i/K$ is a prime
ideal and $Q_i/K$ is $P_i/K$-primary. Then in $C$, $L=\cap_iQ_i$ is a
minimal primary decomposition, and for $i=1,\ldots,r$, each $P_i$ is a
prime ideal and $Q_i$ is $P_i$-primary. In particular, if $L/K$ is an
unmixed ideal in $C/K$, then $L$ is an unmixed ideal in $C$.
\end{remark}
\begin{proof} Note that for each $i$, $C/P_{i}\simeq (C/K)/(P_{i}/K)$, 
so $C/P_i$ is a domain. Moreover, $C/Q_{i}\simeq (C/K)/(Q_{i}/K)$, so
in $C/Q_i$ each divisor of zero is nilpotent. The remainder of the
assertions follow from the basic theory of ideals in factor rings.
\end{proof}

\begin{example}\label{caseb2c2} {\sc Cases $(b.2)$ and $(c.2)$}. 
Take $f=W^n$, $n\geq 1$, in Setting~\ref{generalsetting}. Then
$P=JB+WB$ is a prime ideal of $B$ contained in $\fm_B$. Set
$\fp=PR$. Then $e(R)=n$, $\Min(R/I)=\{\fp\}$, $(\sigp,\lp)=(1,n)$ and
$I$ is $\fp$-primary with $e(R/\fp)=m_1$.
\end{example}
\begin{proof}
By Lemma~\ref{kernel}, $P$ is a prime ideal of height $3$. Since
$I=JR$ is unmixed (see \cite[Proposition~2.2]{op2}), it follows easily
that $PT$ is the unique prime minimal over $JT+fT$.

Set $U=T_{PT}$ (the localisation of $T$ at the prime $PT$). Then
$V=U/IU$ is a one-dimensional local domain with maximal ideal
generated by the image of $W$ in $V$. Hence $V$ is a DVR. It is
immediate that $V/W^nV$ is of length $n$ (as $V$-module). By
definition, this length is the local length of $JT+fT$ at $PT$. Since
$R=T/fT$, we deduce that $\lp$, the local length of $I$ at its unique
minimal prime $\fp=PR$, equals $n$.
\end{proof}

\begin{example}\label{caseb3c3} {\sc Cases $(b.3)$ and $(c.3)$}. 
Take $f=W^{n-1}(W-X)$, $n\geq 2$, in
Setting~\ref{generalsetting}. Then $P_1=JB+WB$ and $P_2=JB+(W-X)B$ are
prime ideals of $B$ contained in $\fm_B$. Set $\fp_i=P_iT$,
$i=1,2$. Then $e(R)=n$, $\Min(R/\fp)=\{\fp_1,\fp_2\}$,
$(\sigma_{\mathfrak{p}_1},l_{\mathfrak{p}_1})=(1,n-1)$,
$(\sigma_{\mathfrak{p}_2},l_{\mathfrak{p}_2})=(1,1)$ and
$I=\fq_1\cap\fp_2$ is a minimal primary decomposition with $\fq_1$ a
$\fp_1$-primary ideal and $e(R/\fp_i)=m_1$.
\end{example}
\begin{proof}
By Lemma~\ref{kernel}, $P_1=JB+WB$ and $P_2=JB+(W-X)B$ are prime
ideals of $B$ contained in $\fm_B$. Since $I=JR$ is unmixed, it
follows that $P_i$ are the only minimal primes above $JT+fT$. Note
that $P_1$ and $P_2$ are distinct, since
$\varphi_{0}(P_1)\neq\varphi_{0}(P_2),$ as is easily seen from the
fact that $X\not\in J$. In particular, $W\not\in P_2$ and $W-X\not\in
P_1$. A simple localization argument shows that $JT+fT=P_1\cap P_2$.
\end{proof}

\begin{example}\label{casec4} {\sc Case $(c.4)$}. 
Take $f=(W^{n-1}-X^{n-2}Y)(W-X)$, $n=3$, in
Setting~\ref{generalsetting}. As in Example~\ref{caseab1c1}, take
$m=(m_1,m_2,m_3)$ in $\{(3,4,5),(4,5,7),(4,7,9)\}$. Then we claim that
$P_1=JB+(W^{n-1}-X^{n-2}Y)B$ and $P_2=JB+(W-X)B$ are prime ideals of
$B$ contained in $\fm_B$. The latter holds by Lemma~\ref{kernel}. To
see the former, it suffices to repeat the argument of
Example~\ref{caseab1c1}~$(i)$ only now having $\psi$ send $X$, $Y$,
$Z$ and $W$ to $t^{(n-1)m_1}$, $t^{(n-1)m_2}$, $t^{(n-1)m_3}$ and
$t^{(n-2)m_1+m_2}$, respectively. Note that in each case
$\gcd((n-1)m_1,(n-1)m_2,(n-1)m_3,(n-2)m_1+m_2)=1$. Set $\fp_i=P_iT$,
$i=1,2$. Then $e(R)=n$, $\Min(R/\fp)=\{\fp_1,\fp_2\}$
$(\sigma_{\mathfrak{p}_1},l_{\mathfrak{p}_1})=(n-1,1)$,
$(\sigma_{\mathfrak{p}_2},l_{\mathfrak{p}_2})=(1,1)$ and
$I=\fp_1\cap\fp_2$ is a minimal primary decomposition with
$e(R/\fp_1)=(n-1)m_1$ and $e(R/\fp_2)=m_1$. (We leave the details to
the reader.)
\end{example}

\begin{example}\label{casec5} {\sc Case $(c.5)$}. 
Take $f=W^{n-2}(W-X)(W-Y)$, $n\geq 3$, in
Setting~\ref{generalsetting}. By Lemma~\ref{kernel}, $P_1=JB+WB$,
$P_2=JB+(W-X)B$ and $P_3=JB+(W-Y)B$ are prime ideals of $B$ contained
in $\fm_B$. Set $\fp_i=P_iT$, $i=1,2,3$. Then $e(R)=n$,
$\Min(R/\fp)=\{\fp_1,\fp_2,\fp_3\}$,
$(\sigma_{\mathfrak{p}_1},l_{\mathfrak{p}_1})=(1,n-2)$,
$(\sigma_{\mathfrak{p}_i},l_{\mathfrak{p}_i})=(1,1)$, for $i=2,3$, and
$I=\fq_1\cap\fp_2\cap\fp_3$ is a minimal primary decomposition with
$\fq_1$ a $\fp_1$-primary ideal and $e(R/\fp_i)=m_1$, for
$i=1,2,3$. (Details are left to the reader.)
\end{example}

\begin{remark}\label{case-domain}
We can even find examples with $f$ a prime element in $B$, hence $R$ a
domain, with some restrictions on the base field $k$. Note that in
Example~\ref{caseab1c1}, for $n=3$, $f=W^3-X^2Y$ is irreducible in
$B$. Indeed, suppose that $f$ has a factor of the form $W-g$, for some
$g\in A$. Then $\varphi_g(f)=0$, so $g^3=X^2Y$. Since $X$ and $Y$ are
irreducible elements in the UFD $A$, this yields a contradiction.

For the cases $(b.3)$ and $(c.3)$, as in Example~\ref{caseb3c3}, and
with $m=(3,4,5)$ and $n=2$, take $f=W^2-XZ$, which is irreducible in
$B$, by an analogous argument. If $\ch(k)\neq 2$, then
$I=(JR+(w-y)R)\cap (JR+(w+y)R)$ is a minimal primary
decomposition. 

For the case $(c.4)$, as in Example~\ref{casec4}, and with $m=(4,5,7)$
and $n=3$, take $f=W^3-X^2Z$, which analogously is irreducible in
$B$. 

If $k$ is separable and does not contain a cube root of unity
different from $1$, then one can show, by a rather lengthy and
technical argument not given here, that $I=(JR+(w-y)R)\cap
(JR+(w^2+yw+y^2)R)$ is a minimal primary decomposition. (Hint: Extend
the base field from $k$ to $k[\lambda]$, where $\lambda$ is a
primitive cube root of unity. Use the properties of integral and
faithfully flat extensions, together with the Cohen-Seidenberg Theorem
\cite[Theorem~5, pp.~33-34]{matsumura2}, particularly
\cite[Theorem~5,~vi)]{matsumura2}.)

For the case $(c.5)$, as in Example~\ref{casec5}, with $m=(4,5,7)$ and
$n=3$ and $f=W^3-X^2Z$ as above, and if $k$ contains a cube root of
unity $\lambda\neq 1$ (and so three distinct cube roots of unity
$1,\lambda,\lambda^2$), then $I=\cap _{j=0}^{2}(JR+(w-\lambda^jy)R)$
is a minimal primary decomposition.

Note that in these examples, for instance when $f=W^2-XZ$, while $R$
is a domain, it is not a UFD, since $w$, $x$ and $z$ are prime
elements in $R$ yet $w^2=xz$. Here, $(x,w)R$ is a non-principal prime
ideal of height $1$ (and $R$ is not Shimoda, see
Section~\ref{introduction}).
\end{remark}

\begin{example}\label{case357} 
{\sc Case $(b.1)$, $m=(3,5,7)$}. Let $k$ be a field of characteristic
different from $2$, not containing a square root of $-1$. Let
$f=W^2+XZ$. Then $JB+fB=JB+(W^2+Y^2)B$. An analogue of the Hint in
Remark~\ref{case-domain} above shows that $JB+fB$ is a prime
ideal. Thus $e(R)=2$ and $I$ is a prime ideal.
\end{example}

\begin{remark}
The examples above prove that all the cases in Corollary~\ref{cases}
and in the main theorem can occur. They also suggest that the
condition $e(R)\leq 3$ is not strictly necessary. However the proof of
Theorem~\ref{mainT} strongly relies on applying the Associative Law of
Multiplicities for small values of $e(R)$. It seems clear then that
radically different techniques will be needed in order to extend
Theorem~\ref{mainT} (still in dimension $3$) to the case of higher, or
indeed arbitrary, multiplicities.
\end{remark}

\section*{Acknowledgment} 
The third author is partially supported by grant
MTM2010-20279-C02-01. Part of this work was carried out while the
third author was visiting the International Centre for Mathematics at
Edinburgh. He wants to thank the Centre as well as the Department of
Mathematics of the University of Edinburgh for their hospitality,
friendliness and good atmosphere for working.

{\small
}

\vspace{0.2cm}
{\footnotesize 

\noindent {\sc Department of Mathematics, School of Science and
  Technology, Meiji University}, \\ Tama, Kawasaki, KANAG 214,
Japan. {\em E-mail address}: goto@math.meiji.ac.jp

 \vspace{0.1cm}

\noindent {\sc Maxwell Institute for Mathematical Sciences, School of
  Mathematics, University of Edinburgh}, \\ EH9 3JZ, Edinburgh, Scotland.
          {\em E-mail address}: L.O'Carroll@ed.ac.uk

\vspace{0.1cm}

\noindent {\sc Departament de Matem\`atica Aplicada~1, Universitat
  Polit\`ecnica de Catalunya}, \\ Diagonal 647, ETSEIB, 08028 Barcelona,
Catalunya. {\em E-mail address}: francesc.planas@upc.edu }

\begin{thebibliography}{cc}
\bibitem{bdf}{V. Barucci, D.E. Dobbs, M. Fontana, Maximality
  properties in numerical semigroups and applications to
  one-dimensional analytically irreducible local
  domains. Mem. Amer. Math. Soc. {\bf 125} (1997), no. 598.}
\bibitem{bf}{V. Barucci, R. Fr\"oberg, One-dimensional almost
  Gorenstein rings. J. Algebra {\bf 188} (1997), no. 2, 418-442.}
\bibitem{bh}{W. Bruns, J. Herzog, Cohen-Macaulay rings. Cambridge
  Studies in Advanced Mathematics, 39. Cambridge University Press,
  Cambridge, 1993.} 
\bibitem{gmp}{S. Goto, N. Matsuoka, T.T. Phuong, Almost Gorenstein
  rings. J. Algebra {\bf 379} (2013), 355-381.}
\bibitem{gop}{S. Goto, L. O'Carroll, F. Planas-Vilanova, Sally's
  question and a conjecture of Shimoda, Nagoya Math. J. {\bf 211}
  (2013), 137-161.}
\bibitem{hs}{C. Huneke, I. Swanson, Integral closure of ideals, rings,
  and modules. London Mathematical Society Lecture Note Series, {\bf
    336}. Cambridge University Press, Cambridge, 2006.}
\bibitem{matsumura2}{H. Matsumura, Commutative algebra. Second
  edition. Mathematics Lecture Note Series, 56. Benjamin/Cum\-mings
  Publishing Co., Inc., Reading, Mass., 1980.}
\bibitem{matsuoka}{T. Matsuoka, On the degree of singularity of
  one-dimensional analytically irreducible noetherian local
  rings. J. Math. Kyoto Univ. {\bf 11} (1971), 485-494.}
\bibitem{miller}{M. Miller, Bourbaki's theorem and prime
  ideals. J. Algebra~{\bf 64} (1980), 29-36.}
\bibitem{nagata}{M. Nagata, Local rings, Interscience Publishers, John
  Wiley \& Sons, New York, London, 1962.}
\bibitem{op2}{L. O'Carroll, F. Planas-Vilanova, Ideals of
  Herzog-Northcott type, Proc. Edinb. Math. Soc. {\bf 54} (2011),
  161-186.}
\bibitem{rgs}{J.C. Rosales, P.A. Garc\'{\i}a-S\'anchez, Numerical
  semigroups. Developments in Mathematics, 20. Springer, New York,
  2009.}
\end{thebibliography}
\end{document}